\newcommand{\mean}{m}
\newcommand{\pmean}{\widehat{m}}
\newcommand{\pp}{\widehat{\theta}}
\newcommand{\py}{\widehat{y}}
\newcommand{\ppy}{\widehat{y}}
\newcommand{\Cov}{C}
\newcommand{\pCov}{\widehat{C}}
\newcommand{\pf}{p}
\newcommand{\N}{\mathcal{N}}
\newcommand{\G}{\mathcal{G}}
\newcommand{\F}{\mathcal{F}}
\newcommand{\E}{\mathbb{E}}
\newcommand{\I}{\mathbb{I}}
\newcommand{\R}{\mathbb{R}}
\newcommand{\Z}{\mathbb{Z}}
\newcommand{\bigO}{\mathcal{O}}
\title{Unscented Kalman Inversion: Efficient Gaussian Approximation to the Posterior Distribution}
\author{Daniel~Z.~Huang\thanks{California Institute of Technology, Pasadena, CA 
  (\email{dzhuang@caltech.edu}).}
\and Jiaoyang~Huang\thanks{New York University, New York, NY
  (\email{jh4427@nyu.edu}).}
  }
\begin{document}

\maketitle

\begin{abstract}
The unscented Kalman inversion~(UKI) method presented in~\cite{UKI} is a general derivative-free approach for the inverse problem. 
UKI is particularly suitable for inverse problems where the forward model is given as a black box and may not be differentiable.
The regularization strategies, convergence property, and speed-up strategies~\cite{UKI,UKI2} of the UKI are thoroughly studied, and the method is capable of handling noisy observation data and solving chaotic inverse problems. 
In this paper, we study the uncertainty quantification capability of the UKI.  
We propose a modified UKI, which allows to well approximate the mean and covariance of the posterior distribution for well-posed inverse problems with large observation data.
Theoretical guarantees for both linear and nonlinear inverse problems are presented.
Numerical results, including learning  of  permeability  parameters  in  subsurface flow and of the Navier-Stokes initial condition from solution data  at  positive times are presented. 
The results obtained by the UKI require only $\bigO(10)$ iterations, and 
match well with the expected results obtained by the Markov Chain Monte Carlo method.

\end{abstract}

\begin{keywords}
Inverse Problem, Uncertainty Quantification, Kalman Filter, Bayesian Inference, Unscented Kalman Inversion
\end{keywords}

\begin{AMS}
  60G35,  62F15, 62M20, 65N21
\end{AMS}

\section{Introduction}
Inverse problems are ubiquitous in engineering and science applications. These include, to name only a few, global climate model calibration~\cite{sen2013global,schneider2017earth,dunbar2020calibration}, material constitutive relation calibration~\cite{huang2020learning,xu2021learning,avery2020computationally}, seismic inversion in geophysics~\cite{russell1988introduction,bunks1995multiscale}, and medical tomography~\cite{mukherjee2020learned,trigo2004electrical}.
These problems may feature multiple scales and may include chaotic and turbulent phenomena, and hence the forward models are very expensive to evaluate. Moreover, the observation data are noisy and uncertainty quantification is important.

Inverse problems can be formulated as recovering unknown parameters $\theta \in \R^{N_{\theta}}$ from the noisy observation $y \in \R^{N_y}$, as following
\begin{equation}
\label{eq:KI}
    y = \G(\theta) + \eta,
\end{equation}
where $\G$ denotes a forward operator mapping parameters to observations, and $\eta$ denotes the observational error, which might depend on $\theta$ or $y$.
An estimated Gaussian distribution of $\eta \sim \N(0,\Sigma_{\eta})$ is given.

From optimization viewpoint, the inverse problem can be formulated to the following nonlinear least-square optimization problem~\cite{engl1996regularization}:
\begin{subequations}
\label{eq:KI2}
\begin{align}
\Phi_R(\theta,y) &:= \Phi(\theta,y)+\frac{1}{2}\lVert\Sigma_{0}^{-\frac{1}{2}}(\theta - r_0) \rVert^2,\\
    \Phi(\theta,y) &:= \frac{1}{2}\lVert\Sigma_{\eta}^{-\frac{1}{2}}(y - \G(\theta)) \rVert^2,\label{eq:KI2-b}
\end{align}
\end{subequations}
where $\Phi_R$ and $\Phi$ are regularized and non-regularized objective functions, respectively. $\Sigma_{\eta} \succ 0$ is strictly positive-definite and normalizes the model-data misfit.
$r_0$ and $\Sigma_{0}\succ 0$ encode
prior mean and covariance information about $\theta$.
From Bayesian viewpoint, $\theta$ and $y$ are treated as random variables, and the inverse problem can be formulated as posterior distribution approximation problem~\cite{kaipio2006statistical, dashti2013bayesian}:
\begin{equation}
\label{eq:KI3}
    \mu(d\theta) = \frac{1}{Z(y)} e^{-\Phi(\theta , y) } \mu_0(d\theta),
\end{equation}
where $\mu_0(d\theta)$ is the prior and $Z(y)$ is the normalization constant:
\begin{equation}
    Z(y) =  \int e^{-\Phi(\theta , y) } \mu_0(d\theta).
\end{equation}
The optimization viewpoint and the Bayesian viewpoint are linked via the fact that 
the minimizer of the regularized nonlinear least-square optimization problem $\Phi_R(\theta,y)$ coincides with the maximum a posterior~(MAP) estimator of \eqref{eq:KI3} 
and
the minimizer of the nonlinear least-square optimization problem $\Phi(\theta,y)$ coincides with the MAP estimator of \eqref{eq:KI3} with an uninformative prior, specifically the improper uniform prior. 
In the present study, we focus on inverse problems with large observation data but possibly without strong beliefs about $\theta$, and therefore, it is common to use uninformative priors to let the data speak for itself.

A useful approach to solve inverse problems, at the intersection of optimization and Bayesian viewpoints,  is to pair the parameter-to-data map~\eqref{eq:KI}  with a stochastic
dynamical system for the parameter, and then employ techniques from filtering to estimate the
parameter given the data. 
Consider the following stochastic dynamical system,\footnote{In the present study, we focus on the  dynamics without regularization, which is equivalent to setting $\alpha=1$ in~\cite{UKI, UKI2}.}
\begin{subequations}
\label{eq:dynamics}
  \begin{align}
  &\textrm{evolution:}    &&\theta_{n+1} = \theta_{n} +  \omega_{n+1}, &&\omega_{n+1} \sim \N(0,\Sigma_{\omega}) \label{eq:evolve}\\
  &\textrm{observation:}  &&y_{n+1} = \G(\theta_{n+1}) + \nu_{n+1}, &&\nu_{n+1} \sim \N(0,\Sigma_{\nu})
\end{align}
\end{subequations}
where $\theta_{n+1}$ is the unknown state vector, and $y_{n+1} =y$ is the observation, the artificial evolution error $\omega_{n+1}$ and artificial observation error $\nu_{n+1}$ are mutually independent, zero-mean Gaussian sequences with covariances $\Sigma_{\omega}$ and $\Sigma_{\nu}$, respectively. 
The Kalman inversion methodology, which takes advantage of Kalman filtering~\cite{kalman1960new,sorenson1985kalman,evensen1994sequential,julier1995new,arasaratnam2009cubature}, originates in the papers~\cite{singhal1989training, puskorius1991decoupled, wan2000unscented, chen2012ensemble,iglesias2013ensemble,emerick2013investigation}.
This methodology is widely used as a non-intrusive optimization method for parameter estimation.
However, as for Bayesian inference, due to the Gaussian ansatz in Kalman filtering, the capability of Kalman inversions to quantify uncertainties or approximate posterior distribution remains unclarified.  
We focus on understanding and improving the capability of Kalman inversions, specifically the UKI, for Bayesian inference.
We find \textbf{adaptively} updating $\Sigma_{\omega}$ in the evolution dynamics~\cref{eq:dynamics}, which leads to an evolving stochastic dynamical system, enables UKI to provide accurate Gaussian approximations to the posterior distribution.

\subsection{Literature Review}

The Kalman filter~\cite{kalman1960new} and its variants, including but not limit to extended Kalman filter~\cite{sorenson1985kalman}, ensemble Kalman filter~\cite{evensen1994sequential}, unscented Kalman filter~\cite{julier1995new,wan2000unscented}, and cubature Kalman filter~\cite{arasaratnam2009cubature} are developed to sequentially update the probability distribution of states in  partially  observed  dynamics.
Kalman filtering is a two-step procedure:  the prediction step, where the state is computed forward in time;  
the analysis step, where the state and its uncertainty are corrected to take into account the observation.
In the analysis step, Kalman filters use Gaussian ansatz to formulate Kalman gain to assimilate the observation and update the distribution, which is valid only for linear state estimation problems, where the probability distribution of states remains Gaussian given Gaussian priors.
Therefore, Kalman filters do not  produce correct estimate of posterior distribution for non-Gaussian problems~\cite{zafari2005assessing}.
However, numerous applications of Kalman filters, including weather forecasts 
~\cite{evensen1994sequential,anderson2001ensemble,bishop2001adaptive} and guidance, navigation, and control of vehicles~\cite{sorenson1985kalman,julier1995new, arasaratnam2009cubature} demonstrate empirically that Kalman filters can not only calibrate the model predication, but also provides uncertainty information for nonlinear state estimation problems, specifically the mean and covariance estimation.
To get better posterior estimation, particle filters~\cite{smith2013sequential} are needed, where the distribution is represented by a large number of random samples.
Most of particle filters consist of prediction step and analysis step.  In the analysis step, these samples and their associated weights are corrected with the information provided by the observation. 
The standard particle filter corrects the weights of each sample following Bayes' theorem.  Different correction methods are proposed, which lead to the random maximum  likelihood method~\cite{kitanidis1995quasi,oliver1996conditioning}, the nonparametric ensemble transform method~\cite{reich2013nonparametric},  and etc.

Filter methods can also be used to estimate the parameters, 
where the  filters are recursively applied to an artificial stochastic dynamical system generally with identity state transition matrix.
This leads to different inversion methods, to name only a few, sequential Monte Carlo sampler~(SMC)~\cite{del2006sequential, kantas2014sequential, beskos2015sequential},  
random maximum  likelihood method~\cite{oliver1996conditioning,chen2012ensemble}, ensemble Kalman inversion~\cite{evensen2009ensemble, gu2006ensemble, iglesias2013ensemble},  unscented Kalman inversion~\cite{wan2000unscented,UKI, UKI2}.
Besides estimating the parameters by minimizing the nonlinear least square problems~\eqref{eq:KI2},  these inversion methods are derived in the Bayesian framework, and therefore,  have the potential to deliver sensitivity and uncertainty information.

This paper mainly focuses on Kalman inversion methodology, which is a general derivative-free approach to solving the inverse problem. 
For inverse problems in general, they are attractive because they are derivative-free, and hence introduce a significant flexibility in the forward solver design.
Therefore, Kalman inversions are suitable for complex multiphysics problems requiring coupling of different solvers~\cite{huang2018simulation,huang2019high,huang2020high,huang2020modeling,adcroft2019gfdl} and methods containing discontinuities
~(i.e., immersed/embedded boundary method~\cite{peskin1977numerical,berger2012progress,huang2018family,huang2020embedded} and adaptive mesh refinement~\cite{berger1989local,borker2019mesh}).
So far, the Kalman inversions have mainly been used for parameter estimation~\cite{iglesias2013ensemble,iglesias2016regularizing, schillings2017analysis,schillings2018convergence, iglesias2020adaptive,chada2020iterative} rather than for Bayesian inference for quantifying uncertainty. 
Due to the Gaussian ansatz and the iterative nature, 
Kalman inversions generally do not converge to the posterior distribution. Specifically, the mean converges but not the covariance in the nonlinear case. Negative numerical evidences are reported in~\cite{ernst2015analysis,garbuno2020interacting}. 
In the present work, we improve the UKI by modifying the stochastic dynamical system through iteratively and adaptively updating the artificial evolution error covariance  matrix. 
And we demonstrate both theoretically and numerically that equipping with the modified stochastic dynamical system,
the Kalman inversion, specifically the UKI, is able to provide good Gaussian approximations to the posterior distribution with an uninformative prior for both linear and nonlinear inverse problems under certain regularity assumptions.

\subsection{Our Contributions}

Our main contribution is the development of theoretical and numerical understanding about UKI for Bayesian inference for quantifying uncertainty.

\begin{itemize}
    \item We modify the underlying stochastic dynamical system of Kalman inversions proposed in~\cite{UKI, UKI2} by iteratively and adaptively updating the artificial evolution covariance matrix, which enables better approximating the posterior covariance matrix.
    \item With this modification, for linear inverse problems, we prove the exponential convergence of the UKI. The mean converges to a minimizer of $\Phi$ and the precision matrix converges to the posterior precision matrix with an uninformative prior.
    \item With this modification, for nonlinear inverse problems, we prove the UKI approximates well with the mean and covariance of the posterior distribution with an uninformative prior, when the forward operator $\G$ is bijective and satisfies certain regularity conditions.  
    \item We demonstrate on the inverse problems studied in the present work
    that the UKI delivers similar posterior mean and covariance comparing to Markov Chain Monte Carlo method and 
    is more efficient than other derivative-free ensemble methods.
    \item For  the  UKI,  all  tests  converge  within $\bigO(10)$  iterations  with  no  empirical variance inflation or early stopping needed.
\end{itemize}

The remainder of the paper is organized as follows. 
In \cref{sec:Alg}, an overview of Kalman inversion algorithm and the modified UKI are presented.
In \cref{sec:Theorem}, theoretical results for the modified UKI are presented.
Numerical applications are provided in \cref{sec:app}, that empirically confirm the theories and demonstrate the effectiveness of the UKI for Bayesian inference for quantifying uncertainty.

\section{The Algorithm}
\label{sec:Alg}
Kalman inversion methods pair the parameter-to-data map~\eqref{eq:KI} with the stochastic
dynamical system~\eqref{eq:dynamics}.
Let denote  $Y_{n} := \{y_1, y_2,\cdots , y_{n}\}$, the observation set at time $n$~(Although $Y_{n}  = y$).
The techniques from Kalman filtering are employed to estimate the distribution  $\mu_{n}$, the conditional distribution of $\theta_{n}|Y_{n}$.
And the updating of $\{\mu_n\}$ is through the prediction and analysis steps~\cite{reich2015probabilistic,law2015data}:
$\mu_n \mapsto \hat{\mu}_{n+1}$, and then $\hat{\mu}_{n+1} \mapsto \mu_{n+1}$, where $\hat{\mu}_{n+1}$ is the distribution of $\theta_{n+1}|Y_n$.
The approximated distribution of $\mu_{n}$ is assumed to be Gaussian, and hence, Kalman inversion methods are Gaussian approximation algorithms.
The  Gaussian  approximation  is  good  when  there  are  enough  observation  data.  Since Bernstein-von Mises theorem~\cite{le2012asymptotics,van2000asymptotic,freedman1999wald,lu2017gaussian} states the posterior distribution becomes asymptotically a multivariate normal distribution under certain regularity conditions.

\subsection{Gaussian Approximation}
\label{sec:KF_Intro}
This conceptual algorithm maps Gaussians into Gaussians. 
In the prediction step, 
assume that $\mu_n \approx \N(\mean_n, \Cov_n)$, then under~\cref{eq:evolve}, $\hat{\mu}_{n+1} = \N(\pmean_{n+1}, \pCov_{n+1})$ is also Gaussian and satisfies
\begin{equation}
\label{eq:KF_pred}
\begin{split}
&\pmean_{n+1} = \E[\theta_{n+1}|Y_n] =  \mean_n \qquad \pCov_{n+1} = \mathrm{Cov}[\theta_{n+1}|Y_n] = \Cov_{n} + \Sigma_{\omega}.
\end{split}
\end{equation}

In the analysis step, we assume that the joint distribution of     $\{\theta_{n+1}, y_{n+1}\} | Y_{n}$ can be approximated by a Gaussian distribution
\begin{equation}
\label{eq:KF_joint}
    \N\Big(
    \begin{bmatrix}
    \pmean_{n+1}\\
    \py_{n+1}
    \end{bmatrix}, 
    \begin{bmatrix}
  \pCov_{n+1} & \pCov_{n+1}^{\theta p}\\
    {{\pCov_{n+1}}^{\theta p}}{}^{T} & \pCov_{n+1}^{pp}
    \end{bmatrix}
    \Big),
\end{equation}
where 
\begin{equation}
\label{eq:KF_joint2}
\begin{split}
    &\py_{n+1} = \E[y_{n+1}|Y_n] = \E[\G(\theta_{n+1})|Y_n], \\
    &\pCov_{n+1}^{\theta p} = \mathrm{Cov}[\theta_{n+1}, y_{n+1}|Y_n] = \mathrm{Cov}[\theta_{n+1}, \G(\theta_{n+1})|Y_n], \\
    &\pCov_{n+1}^{p p} = \mathrm{Cov}[y_{n+1}|Y_n] = \mathrm{Cov}[\G(\theta_{n+1})|Y_n] + \Sigma_{\nu}.\\
\end{split}
\end{equation}
Conditioning the Gaussian in \cref{eq:KF_joint} to find $\theta_{n+1}|{Y_n, y_{n+1}} = \theta_{n+1}|Y_{n+1}$, gives the following expressions for the mean $\mean_{n+1}$ and covariance $\Cov_{n+1}$ of the approximation to $\mu_{n+1}$ : 
\begin{equation}
\label{eq:KF_analysis}
    \begin{split}
        \mean_{n+1} &= \pmean_{n+1} + \pCov_{n+1}^{\theta p} (\pCov_{n+1}^{p p})^{-1} (y_{n+1} - \py_{n+1}), \\
         \Cov_{n+1} &= \pCov_{n+1} - \pCov_{n+1}^{\theta p}(\pCov_{n+1}^{p p})^{-1} {\pCov_{n+1}^{\theta p}}{}^{T}.
    \end{split}
\end{equation}
\begin{remark}
Assume $\mu_{n}=\N(\mean_n,\Cov_n)$, 
for linear $\G$, the Gaussian approximation $\N(\mean_{n+1}, \Cov_{n+1})$ in~\cref{eq:KF_analysis} is exact, namely $\mu_{n+1}=\N(\mean_{n+1},\Cov_{n+1})$; for nonlinear but close to linear $\G$, the Gaussian approximation $\N(\mean_{n+1}, \Cov_{n+1})$ is a good approximation of the conditional distribution $\mu_{n+1}$, an upper bound of the Kullback–Leibler divergence between them is presented in \cref{sec:app:GA}.
\end{remark}
\Cref{eq:KF_pred,eq:KF_joint,eq:KF_joint2,eq:KF_analysis} establish a conceptual algorithm for application of Gaussian approximation to solve the inverse problems.
And the integrals appearing in~\cref{eq:KF_joint2} are approximated by the extended or unscented approach, which is detailed in the following subsections.

\subsection{Extended Kalman Inversion~(ExKI)}
ExKI approximates integrals in \cref{eq:KF_joint2} analytically by applying first-order Taylor expansion to $\G(\theta_{n+1})$ at the conditional expectation $\E[\theta_{n+1}|y] = \pmean_{n+1}$, 
\begin{equation}
\label{eq:ExKI_Taylor}
    \G(\theta_{n+1}) \approx \G(\pmean_{n+1}) + d\G(\pmean_{n+1}) (\theta_{n+1} - \pmean_{n+1}).
\end{equation}
And the iteration procedure of ExKI becomes:
\begin{itemize}
    \item Prediction step : 
    \begin{equation}
    \label{eq:ExKI-1.1}
    \begin{split}
        &\pmean_{n+1} = \mean_{n} \qquad \pCov_{n+1} = \Cov_{n} + \Sigma_{\omega}.\\
    \end{split}
    \end{equation}
    \item Analysis step :
    \begin{equation}
    \label{eq:ExKI-1.2}
    \begin{split}
    &\py_{n+1} = \G(\pmean_{n+1}), \\
    &\pCov_{n+1}^{\theta p} = \pCov_{n+1} d\G (\pmean_{n+1})^T,\\
    &\pCov_{n+1}^{p p} = d\G (\pmean_{n+1})\pCov_{n+1}d\G (\pmean_{n+1})^T + \Sigma_{\nu},\\
    &\mean_{n+1} = \pmean_{n+1} + \pCov_{n+1}^{\theta p}(\pCov_{n+1}^{pp})^{-1}(y - \py_{n+1}),\\
    &\Cov_{n+1} = \pCov_{n+1} - \pCov^{\theta p}_{n+1}(\pCov^{pp}_{n+1})^{-1}{\pCov^{\theta p}_{n+1}}{}^{T}.\\
    \end{split}
\end{equation}
\end{itemize}

\subsection{Unscented Kalman Inversion~(UKI)}
\label{subsec:UKI}
UKI approximates the integrals in \cref{eq:KF_joint2} by means of deterministic quadrature rules. This is the idea
of the unscented transform~\cite{julier1995new,wan2000unscented} which we now define.
\begin{definition}[Modified Unscented Transform~\cite{UKI}]
\label{def:unscented_transform}
Let denote Gaussian random variable $\theta \sim \N(\mean, \Cov) \in \R^{N_{\theta}}$, $2N_{\theta}+1$ symmetric sigma points are chosen deterministically:
\begin{equation}
    \theta^0 = \mean \qquad \theta^j = \mean + c_j [\sqrt{\Cov}]_j  \qquad \theta^{j+N_\theta}  = \mean - c_j [\sqrt{\Cov}]_j\quad (1\leq j\leq N_\theta),
\end{equation}
where $[\sqrt{\Cov}]_j$ is the $j$th column of the Cholesky factor of $\Cov$. The quadrature rule approximates the mean and covariance of the transformed variable $\G_i(\theta)$ as follows,  
\begin{equation}
\label{eq:ukf-original}
\begin{split}
    \E[\G_i(\theta)] \approx \G_i(\theta^0)\qquad 
    \textrm{Cov}[\G_1(\theta), \G_2(\theta]  \approx \sum_{j=1}^{2N_{\theta}} W_j^{c} (\G_1(\theta^j) - \E\G_1(\theta))(\G_2(\theta^j) - \E\G_2(\theta))^T. 
\end{split}
\end{equation}
Here these constant weights are 
\begin{equation}
\begin{split}
    &c_1=c_2\cdots=c_{N_\theta} = \sqrt{N_\theta +\lambda} \quad W_j^{c} =\frac{1}{2(N_\theta+\lambda)}~(j=1,\cdots,2N_{\theta})\\
    &\lambda = a^2 (N_\theta + \kappa) - N_\theta \qquad \kappa = 0 \qquad a=\min\{\sqrt{\frac{4}{N_\theta + \kappa}},  1\}.
\end{split}
\end{equation}
\end{definition}

Consider the Gaussian approximation algorithm defined by \cref{eq:KF_pred,eq:KF_joint,eq:KF_joint2,eq:KF_analysis}.
By utilizing the aforementioned quadrature rule, the iteration procedure of the UKI becomes:
\begin{itemize}
\item Prediction step : 
    \begin{align*}
        &\pmean_{n+1} = \mean_{n} \qquad \pCov_{n+1} = \Cov_n + \Sigma_{\omega}.\\
    \end{align*}
    \item Generate sigma points :
    \begin{align*}
    &\pp_{n+1}^0 = \pmean_{n+1}, \\
    &\pp_{n+1}^j = \pmean_{n+1} + c_j [\sqrt{\pCov_{n+1}}]_j \quad (1\leq j\leq N_\theta),\\ 
    &\pp_{n+1}^{j+N_\theta} = \pmean_{n+1} - c_j [\sqrt{\pCov_{n+1}}]_j\quad (1\leq j\leq N_\theta).
    \end{align*}
\item Analysis step :
  \begin{equation}
  \label{eq:UKI-analysis}
  \begin{split}
        &\ppy^j_{n+1} = \mathcal{G}(\pp^j_{n+1}) \qquad \py_{n+1} = \ppy^0_{n+1},\\
         &\pCov^{\theta p}_{n+1} = \sum_{j=1}^{2N_\theta}W_j^{c}
        (\pp^j_{n+1} - \pmean_{n+1} )(\ppy^j_{n+1} - \py_{n+1})^T, \\
        &\pCov^{pp}_{n+1} = \sum_{j=1}^{2N_\theta}W_j^{c}
        (\ppy^j_{n+1} - \py_{n+1} )(\ppy^j_{n+1} - \py_{n+1})^T + \Sigma_{\nu},\\
        &\mean_{n+1} = \pmean_{n+1} + \pCov^{\theta p}_{n+1}(\pCov^{pp}_{n+1})^{-1}(y - \py_{n+1}),\\
        &\Cov_{n+1} = \pCov_{n+1} - \pCov^{\theta p}_{n+1}(\pCov^{pp}_{n+1})^{-1}{\pCov^{\theta p}_{n+1}}{}^{T}.\\
    \end{split}
    \end{equation}
\end{itemize}

\subsection{Choice of Hyperparameters}
\label{ssub:hyperparameters}
The hyperparameters $\Sigma_{\omega}$ and $\Sigma_{\nu}$ in the stochastic dynamical system~\cref{eq:dynamics} are chosen at the $n$-th iteration, as following
\begin{equation}
\label{eq:hyperparameters}
\boxed{
    \Sigma_{\nu} = 2\Sigma_{\eta} \quad \textrm{ and } \quad \Sigma_{\omega} =  \Cov_n,
    }
\end{equation} 
It is worth mentioning the artificial evolution error covariance~$\Sigma_{\omega}$ is adaptively updated as the estimated covariance~$\Cov_{n}$. This choice of $\Sigma_{\omega}$ marks the key difference to the previous UKI discussed in~\cite{UKI, UKI2}.

\begin{remark}
A useful way to think of the procedure is through the analogy to Metropolis–Hastings algorithm (MH). 
The prediction step corresponds to the proposal step in MH, and hence, the adaptively updating of $\Sigma_{\omega}$ is similar to Adaptive Proposal and Adaptive Metropolis strategies~\cite{haario1999adaptive,haario2001adaptive}, 
where the proposal distribution is continuously adapted using the information contained in the sample path of the Markov chain. 
The analysis step corresponds to the acceptance-rejection step in MH.
\end{remark}

\section{Theoretical Insights}
\label{sec:Theorem}
The aforementioned Kalman inversion methodology recursively applies Gaussian approximation in each iteration to solve the inverse problem. A useful way to think of the iterative approach is through the following updating relation
\begin{equation*}
    \mu_{n+1}(d\theta) = \frac{1}{Z_n}\exp\big(-\Phi(\theta,y)\big) \mu_n(d\theta). 
\end{equation*}
In the limit of large $n$, $\mu_n$
will tend to concentrate on minimizers of $\Phi$; this follows from the identity
\begin{equation*}
\mu_n(d\theta)=\frac{1}{\bigl(\Pi_{\ell=0}^{n-1}Z_{\ell}\bigr)}\exp\bigl(-n\Phi(\theta,y)\bigr)\mu_0(d\theta).
\end{equation*}
The prior information fades away, and therefore, Kalman inversion methodology generally does not converge to the posterior distribution $\theta|y$.

However, in this section, we will show that Kalman inversion with the hyperparameters defined in~\cref{eq:hyperparameters}~ provides an accurate Gaussian approximation to the posterior distribution $\theta|y$ with an \textbf{uninformative prior} under certain conditions.

\subsection{The Linear Setting}
In this subsection, we study the UKI in the context of linear inverse problems, for which $\G(\cdot) = G$. 
Thanks to the linearity, equations~\eqref{eq:KF_joint2} are reduced to 
\begin{align*}
    \py_{n+1} = G\mean_n, \quad \pCov_{n+1}^{\theta p} = \pCov_{n+1} G^T,\quad  \textrm{and} \quad \pCov_{n+1}^{pp} = G  \pCov_{n+1} G^T + \Sigma_{\nu}.
\end{align*}
The update equations~\eqref{eq:KF_analysis} become
\begin{equation}
\label{eq:Lin_KF_analysis}
    \begin{split}
        \mean_{n+1} &= \mean_{n} + \pCov_{n+1} G^T (G  \pCov_{n+1} G^T + \Sigma_{\nu})^{-1} (y - G\mean_{n}), \\
         \Cov_{n+1}&= \pCov_{n+1} - \pCov_{n+1} G^T(G  \pCov_{n+1} G^T + \Sigma_{\nu})^{-1} G \pCov_{n+1},
    \end{split}
\end{equation}
with $\pCov_{n+1} =  \Cov_{n} + \Sigma_{\omega}$.
We have the following theorem about the convergence of the Kalman inversion:

\begin{theorem}
\label{th:lin_converge}
Assume the initial covariance matrix $\Cov_{0} \succ 0$ is strictly
positive definite. 
The iteration for the conditional mean $\mean_n$ and precision matrix $\Cov^{-1}_{n}$ characterizing the distribution of $\theta_n|Y_n$
converges exponentially fast to limit $\mean_{\infty}, \Cov^{-1}_{\infty}.$
Furthermore
the limiting mean $\mean_{\infty}$ is a minimizer
of the unregularized least squares
functional $\Phi$~\eqref{eq:KI2-b};
the limiting precision matrix $\Cov^{-1}_{\infty} = G^T\Sigma_{\eta}^{-1}G$, which is the 
posterior precision matrix with an uninformative prior.
\end{theorem}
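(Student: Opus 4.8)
The plan is to substitute the hyperparameter choices $\Sigma_\nu = 2\Sigma_\eta$ and $\Sigma_\omega = \Cov_n$ into the linear update equations \eqref{eq:Lin_KF_analysis} and track the evolution of the precision matrix $\Cov_n^{-1}$ rather than $\Cov_n$ itself, since the precision has the cleaner recursion. With $\pCov_{n+1} = \Cov_n + \Sigma_\omega = 2\Cov_n$, the covariance update becomes $\Cov_{n+1} = 2\Cov_n - 2\Cov_n G^T(2G\Cov_n G^T + 2\Sigma_\eta)^{-1}G \cdot 2\Cov_n = 2\Cov_n - 2\Cov_n G^T(G\Cov_n G^T + \Sigma_\eta)^{-1}G\Cov_n$. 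Applying the Sherman–Morrison–Woodbury identity to $(2\Cov_n)^{-1} + G^T(2\Sigma_\eta)^{-1}G$, or simply recognizing the standard Kalman-analysis-as-precision-addition structure, I expect this to collapse to the affine recursion
\begin{equation*}
\Cov_{n+1}^{-1} = \tfrac12 \Cov_n^{-1} + \tfrac12 G^T \Sigma_\eta^{-1} G.
\end{equation*}
First I would verify this identity carefully (this is the one genuine computation, but it is short). Once established, the scalar-like fixed-point structure is immediate: writing $A := G^T\Sigma_\eta^{-1}G \succeq 0$ and $E_n := \Cov_n^{-1} - A$, we get $E_{n+1} = \tfrac12 E_n$, so $\Cov_n^{-1} = A + 2^{-n}(\Cov_0^{-1} - A) \to A$ exponentially fast, proving $\Cov_\infty^{-1} = G^T\Sigma_\eta^{-1}G$. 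Note $\Cov_n^{-1}$ stays positive definite for all $n$ (it is a convex-type combination of $\Cov_0^{-1} \succ 0$ and $A \succeq 0$ with the $\Cov_0^{-1}$ weight positive), so the iteration is well defined; if $A$ is only positive semidefinite the limiting precision may be singular, which is exactly the expected degenerate-posterior behavior and should be remarked on.

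Next I would handle the mean. Rewriting the mean update \eqref{eq:Lin_KF_analysis} using $\Cov_{n+1} = \pCov_{n+1} - \pCov_{n+1}G^T(\cdots)^{-1}G\pCov_{n+1}$ and the Kalman-gain identity $\pCov_{n+1}G^T(G\pCov_{n+1}G^T+\Sigma_\nu)^{-1} = \Cov_{n+1}G^T\Sigma_\nu^{-1}$, the update $\mean_{n+1} = \mean_n + \Cov_{n+1}G^T\Sigma_\nu^{-1}(y - G\mean_n)$ becomes, after multiplying through by $\Cov_{n+1}^{-1}$ and using $\Sigma_\nu = 2\Sigma_\eta$ together with $\Cov_{n+1}^{-1} = \tfrac12\Cov_n^{-1} + \tfrac12 G^T\Sigma_\eta^{-1}G$,
\begin{equation*}
\Cov_{n+1}^{-1}\mean_{n+1} = \tfrac12 \Cov_n^{-1}\mean_n + \tfrac12 G^T\Sigma_\eta^{-1} y.
\end{equation*}
This is again an affine contraction on the vector $v_n := \Cov_n^{-1}\mean_n$, with multiplier $\tfrac12$ in the homogeneous part, so $v_n \to v_\infty$ with $v_\infty = G^T\Sigma_\eta^{-1}y$ exponentially fast. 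Combined with $\Cov_n^{-1}\to G^T\Sigma_\eta^{-1}G$, this gives $\mean_n = (\Cov_n^{-1})^{-1}v_n \to \mean_\infty$ exponentially (when $A$ is invertible; otherwise one works on the range of $A$ and states convergence of $v_n$ directly). Finally, $G^T\Sigma_\eta^{-1}G\,\mean_\infty = G^T\Sigma_\eta^{-1}y$ is precisely the normal equation $\nabla_\theta \Phi(\mean_\infty, y) = 0$ for the unregularized functional \eqref{eq:KI2-b}, so $\mean_\infty$ is a minimizer of $\Phi$, and $\Cov_\infty^{-1} = G^T\Sigma_\eta^{-1}G$ is the posterior precision for \eqref{eq:KI3} under an improper uniform prior, as claimed.

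The main obstacle is the first step: confirming that the combination $\pCov_{n+1} = 2\Cov_n$ together with the Woodbury reduction really produces the clean halving recursion $\Cov_{n+1}^{-1} = \tfrac12\Cov_n^{-1} + \tfrac12 G^T\Sigma_\eta^{-1}G$ — this is where the specific factor-of-two choices $\Sigma_\omega = \Cov_n$ and $\Sigma_\nu = 2\Sigma_\eta$ conspire, and getting the bookkeeping exactly right (rather than, say, $\Cov_{n+1}^{-1} = \Cov_n^{-1} + \text{something}$, which would fade the data or blow up) is the crux. Everything after that is a routine linear fixed-point argument. A minor secondary point is stating the convergence rate precisely: the contraction factor is $\tfrac12$, so the error decays like $2^{-n}$ in operator norm for the precision and, after composing with the (bounded, for $A\succ0$) inverse map, at the same geometric rate for the mean; I would state this explicitly to justify "exponentially fast."
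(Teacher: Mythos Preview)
Your precision-matrix argument is exactly the paper's: Woodbury on the covariance update with $\pCov_{n+1}=2\Cov_n$, $\Sigma_\nu=2\Sigma_\eta$ gives $\Cov_{n+1}^{-1}=\tfrac12\Cov_n^{-1}+\tfrac12 G^T\Sigma_\eta^{-1}G$ and hence the explicit formula $\Cov_n^{-1}=(1-2^{-n})G^T\Sigma_\eta^{-1}G+2^{-n}\Cov_0^{-1}$.

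For the mean you take a genuinely different route. The paper rewrites $\mean_{n+1}=\mean_n+\Cov_{n+1}G^T\Sigma_\nu^{-1}(y-G\mean_n)$, then splits $\mean_n=\mean_n^{\parallel}+\mean_n^{\bot}$ along $\mathrm{Range}(G^T)\oplus\mathrm{Ker}(G)$ and bounds the spectral radius of the update map on $\mathrm{Range}(G^T)$ using uniform two-sided bounds on $\Cov_{n+1}^{-1}$; the $\mathrm{Ker}(G)$ component is then driven by an exponentially decaying forcing. Your substitution $v_n:=\Cov_n^{-1}\mean_n$ is slicker: the identity $v_{n+1}=\tfrac12 v_n+\tfrac12 G^T\Sigma_\eta^{-1}y$ is correct and gives $v_n\to G^T\Sigma_\eta^{-1}y$ at rate $2^{-n}$ with no spectral-radius estimate needed. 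When $A=G^T\Sigma_\eta^{-1}G\succ0$ this immediately yields $\mean_n=\Cov_n v_n\to A^{-1}G^T\Sigma_\eta^{-1}y$ exponentially, and this is cleaner than the paper's argument.

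The gap is the singular case, which the theorem does cover (note the remark following it about non-empty $\mathrm{Ker}(G)$). There your sentence ``otherwise one works on the range of $A$ and states convergence of $v_n$ directly'' is not enough: $v_n\to G^T\Sigma_\eta^{-1}y$ does \emph{not} by itself imply $\mean_n=\Cov_n v_n$ converges, because $\Cov_n$ blows up like $2^n$ in the $\mathrm{Ker}(G)$ directions while $v_n$ has an $O(2^{-n})$ component there (coming from $2^{-n}\Cov_0^{-1}\mean_0$). Extracting a finite limit for $\mean_n$ from the product requires precisely a $\mathrm{Range}(G^T)/\mathrm{Ker}(G)$ block analysis---essentially what the paper does. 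So your plan is complete and more elegant for injective $G$, but to match the theorem's generality you still need the paper's decomposition (or an equivalent singular-perturbation argument on $[(1-t)A+t\Cov_0^{-1}]^{-1}[(1-t)b+tv_0]$ as $t=2^{-n}\to0$).
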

\begin{proof}
The proof is in \cref{sec:app:Linear-UKI}.
\end{proof}

\begin{remark}
When $G$ has empty null space, which corresponds to a well-posed inverse problem, the posterior distribution with an uninformative prior exists. The covariance matrix $\{\Cov_n\}$ converges to $\Big(G^T\Sigma_{\eta}^{-1}G\Big)^{-1}$, which is the posterior covariance matrix with the uninformative prior.
\end{remark}

\begin{remark}
When $G$ has non-empty null space, namely $G^T\Sigma_{\eta}^{-1}G$ is singular, and hence the posterior distribution with an uninformative prior does not exist. $\Cov_{\infty}^{-1}$ is singular and therefore, the covariance matrix $\{\Cov_{n}\}$ diverges to $\infty$. We have the following bound
\begin{equation*}
    \Cov_n \preceq 2^n \Cov_0.
\end{equation*}
\end{remark}

\subsection{The Nonlinear Setting}
\label{subsec:nonlinear-the}
In this subsection, we study the UKI in the context of  nonlinear well-posed inverse problems.
The following pull-back distribution bridges the posterior distribution~\eqref{eq:KI3} and the stationary Gaussian distribution obtained by the Kalman inversion.
\begin{definition}[Pull-back random variable]
Assume $N_{\theta} = N_y$, given a bijective function $\G: R^{N_{\theta}}\mapsto R^{N_y}$ and an arbitrary vector $y\in R^{N_y}$. For any random  vector $\xi \in R^{N_y}$, the corresponding pull-back random variable is defined as
\begin{equation}
\label{eq:inv_distribution}
    \theta^{\xi} = \G^{-1}(y - \xi).
\end{equation}
\end{definition}

We have the following theorem about the posterior distribution $\theta | y$ with an uninformative prior defined in~\cref{eq:KI3} and the pull-back distribution of $\theta^{\eta}= \G^{-1}(y - \eta)$:
\begin{theorem}
\label{theorm:nonlinear}
Consider the posterior density function $p(\theta | y) $ with an uninformative prior
\begin{equation}
\label{eq:posterior_distribution}
    p(\theta | y) \sim  \frac{1}{Z} e^{- \Phi (\theta, y)},
\end{equation}
and the pull-back density function $p(\theta^{\eta})$. 
We assume 
\begin{enumerate}
    \item the map $\G$ is a bijection, hence, $N_\theta = N_y$ and $\G^{-1}$ exists,
    \item $\Big| \texttt{det}\frac{d\G^{-1}(\theta)}{d \theta} \Big|$ has Lipschitz property: 
    \begin{equation}
        \Big|| \texttt{det}\frac{d\G^{-1}(\theta_1)}{d \theta}| - | \texttt{det}\frac{d\G^{-1}(\theta_2)}{d \theta}| \Big| \leq c_0\lVert\theta_1 -  \theta_2\rVert^{c_1},
    \end{equation}
    \item $\G^{-1}(\theta)$ does not grow too fast, and we have 
    \begin{equation*}
        \frac{1}{\sqrt{(2\pi)^{N_y} \texttt{det}\Sigma_{\eta}}}\int  e^{- \frac{1}{2}(y - \theta)^T \Sigma^{-1}_{\eta} (y - \theta)} \lVert \G^{-1}(\theta) \rVert^4 d\theta \leq c_2, 
    \end{equation*}
    \item The normalization constant
    \begin{equation*}
    \begin{split}
        Z = \int  e^{- \Phi (\theta, y)} d\theta  = \int  e^{- \frac{1}{2}(y - \theta^{'})^T \Sigma^{-1}_{\eta} (y - \theta^{'})} \Big| \texttt{det}\frac{d\G^{-1}(\theta^{'})}{d\theta}\Big| d\theta' \textrm{ where } \theta' = \G(\theta)\\
    \end{split}
    \end{equation*}
    exists and has positive lower bound $Z \geq c_3 > 0$,
    \item  $c_i$ and $N_\theta$ are $\bigO(1)$ constants, and the spectral radius of $\Sigma_{\eta}$, $\rho(\Sigma_{\eta})$ is small enough.
\end{enumerate}
Then  $p(\theta | y) $ and $p(\theta^{\eta})$
have close mean and covariance, which satisfy
\begin{equation}
\label{eq:post-pull-bound}
    \rVert \mean - \mean^{\eta}  \rVert_{\infty} = \bigO(\rho(\Sigma_{\eta})^{c_1} \sqrt{\texttt{det}\Sigma_{\eta}}) \qquad \textrm{and} \qquad
    \rVert \Cov - \Cov^{\eta}  \rVert_{\infty} = \bigO(\rho(\Sigma_{\eta})^{c_1}\sqrt{\texttt{det}\Sigma_{\eta}}),
\end{equation}
here $\mean$ and $\Cov$ and $\mean^{\eta}$ and $\Cov^{\eta}$ are the mean and covariance of $p(\theta | y) $  and  $p(\theta^{\eta})$, respectively.
\end{theorem}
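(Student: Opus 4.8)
The plan is to compare the two densities directly by writing both as integrals against the same Gaussian measure and controlling the discrepancy between the Jacobian factor and a constant. Using the change of variables $\theta' = \G(\theta)$ in \eqref{eq:KI3}, the posterior density can be written as
\begin{equation*}
    p(\theta \mid y) = \frac{1}{Z}\, e^{-\frac12 (y-\G(\theta))^T\Sigma_\eta^{-1}(y-\G(\theta))},
\end{equation*}
while the pull-back variable $\theta^\eta = \G^{-1}(y-\eta)$ with $\eta\sim\N(0,\Sigma_\eta)$ has density
\begin{equation*}
    p(\theta^\eta)(\theta) = \frac{1}{\sqrt{(2\pi)^{N_y}\texttt{det}\Sigma_\eta}}\, e^{-\frac12(y-\G(\theta))^T\Sigma_\eta^{-1}(y-\G(\theta))}\,\Big|\texttt{det}\tfrac{d\G^{-1}(\theta')}{d\theta}\Big|\Big|_{\theta'=\G(\theta)},
\end{equation*}
obtained by pushing forward the Gaussian law of $y-\eta$ through $\G^{-1}$. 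So the two densities differ only through the Jacobian weight $J(\theta') := \big|\texttt{det}\, d\G^{-1}(\theta')/d\theta\big|$ evaluated at $\theta'=\G(\theta)$: relative to the Gaussian kernel $g(\theta'):=(2\pi)^{-N_y/2}(\texttt{det}\Sigma_\eta)^{-1/2}e^{-\frac12(y-\theta')^T\Sigma_\eta^{-1}(y-\theta')}$, the pull-back density (expressed in the $\theta'$ variable) is $g(\theta')J(\theta')$ normalized by $Z = \int g(\theta')J(\theta')\,d\theta'$, whereas the posterior is $g(\theta')J(\theta')$ again but with expectations taken of $\G^{-1}(\theta')$ — I would be careful here that both means are of the form $\int \G^{-1}(\theta') g(\theta') J(\theta')\,d\theta' / Z$, so in fact $\mean^\eta$ and the posterior mean $\mean$ are computed against the \emph{same} weighted measure. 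The real content is therefore not a difference between the two but the observation that both equal the $g$-average of $\G^{-1}$ up to a Jacobian correction, and the claimed bound \eqref{eq:post-pull-bound} must be the size of that correction.

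Concretely, I would set $J_0 := J(y)$ (the Jacobian at the ``center'' $\theta'=y$), and write $Z = \int g(\theta')J(\theta')\,d\theta' = J_0 + \int g(\theta')\big(J(\theta')-J_0\big)d\theta'$. By the Lipschitz assumption (2), $|J(\theta')-J_0|\le c_0\|\theta'-y\|^{c_1}$, and since $\theta'-y$ under $g$ is $\N(0,\Sigma_\eta)$, the Gaussian moment $\int g(\theta')\|\theta'-y\|^{c_1}d\theta' = \bigO(\rho(\Sigma_\eta)^{c_1/2})$ — actually, reading the target bound, the exponent on $\rho(\Sigma_\eta)$ is $c_1$ (not $c_1/2$) and there is an extra $\sqrt{\texttt{det}\Sigma_\eta}$, so I suspect the intended normalization has the Gaussian density carrying the $1/\sqrt{\texttt{det}\Sigma_\eta}$ prefactor \emph{outside}, i.e. one compares the \emph{unnormalized} integrands and the $\sqrt{\texttt{det}\Sigma_\eta}$ is the volume scale that appears when bounding $\int e^{-\frac12(\cdot)}\|\G^{-1}(\theta')\|^k\,d\theta'$ crudely rather than as an expectation; I would reconcile this bookkeeping at the start so the stated powers come out exactly. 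Granting that, the mean satisfies
\begin{equation*}
    \mean = \mean^\eta = \frac{1}{Z}\int \G^{-1}(\theta')\, g(\theta')J(\theta')\,d\theta',
\end{equation*}
and I would split $\mean - \widetilde\mean$, where $\widetilde\mean := \int \G^{-1}(\theta') g(\theta')\,d\theta' \cdot (J_0/Z)$ is the ``Jacobian-frozen'' mean, bounding the remainder by $\tfrac{1}{Z}\int \|\G^{-1}(\theta')\|\,g(\theta')\,|J(\theta')-J_0|\,d\theta'$; using assumption (2) for $|J-J_0|$, Cauchy–Schwarz with assumption (3) controlling $\int g\|\G^{-1}\|^4$, and assumption (4) for the lower bound $Z\ge c_3$, this remainder is $\bigO(\rho(\Sigma_\eta)^{c_1}\sqrt{\texttt{det}\Sigma_\eta})$. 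The covariance estimate is analogous but requires the fourth-moment control of $\G^{-1}$ from assumption (3), since $\Cov$ involves $\G^{-1}(\theta')\G^{-1}(\theta')^T$ and one again compares against the Jacobian-frozen second moment, then subtracts the (already controlled) mean-outer-product.

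The key auxiliary fact tying this to the posterior is that $p(\theta\mid y)$ and $p(\theta^\eta)$ are \emph{the same density} when the uninformative prior is Lebesgue measure — the only subtlety is that $p(\theta^\eta)$ comes with the Jacobian of $\G^{-1}$ while $p(\theta\mid y)$ does not, so actually they are \emph{not} identical, and the theorem is quantifying exactly the mismatch caused by that Jacobian being non-constant; if $\G^{-1}$ were affine the Jacobian would be constant, $J(\theta')\equiv J_0$, and the two would coincide exactly, consistent with \eqref{eq:post-pull-bound} vanishing in the linear case up to the $\rho(\Sigma_\eta)$ factor. I expect the main obstacle to be the moment and tail estimates: one must ensure that the regions where $\|\theta'-y\|$ is large (so the Lipschitz bound $c_0\|\theta'-y\|^{c_1}$ is useless, since $J$ itself could be large there) are suppressed strongly enough by the Gaussian $g$, and this is precisely what assumption (3)'s fourth-moment bound on $\G^{-1}$ under the Gaussian and assumption (5)'s smallness of $\rho(\Sigma_\eta)$ are there to guarantee — making the constants in ``$\bigO$'' genuinely independent of $\Sigma_\eta$ as $\rho(\Sigma_\eta)\to 0$ will require tracking that the $c_i$ bounds hold uniformly, which is the most delicate part of the bookkeeping. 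A secondary subtlety is that $J(\theta') \le J_0 + c_0\|\theta'-y\|^{c_1}$ alone does not obviously keep $Z$ bounded away from $0$; that is why assumption (4) is imposed directly rather than derived, and I would simply invoke it.
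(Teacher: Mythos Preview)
Your overall strategy---change variables to $\theta'=\G(\theta)$, express both expectations against the Gaussian $g=\N(y,\Sigma_\eta)$, and control the discrepancy via the Lipschitz bound on the Jacobian $J(\theta'):=|\det d\G^{-1}(\theta')|$ together with Cauchy--Schwarz, the fourth-moment bound (assumption~3), and $Z\ge c_3$ (assumption~4)---is exactly the paper's, and the ingredients are correctly identified.

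The genuine gap is in the bookkeeping of \emph{which} expectation carries the Jacobian weight: you have it inverted, and this leads you to the false assertion ``$\mean=\mean^\eta=\frac{1}{Z}\int\G^{-1}(\theta')g(\theta')J(\theta')\,d\theta'$'' midway through, which would make the theorem vacuous. After the substitution $\theta'=\G(\theta)$ (so $d\theta=J(\theta')\,d\theta'$) it is the \emph{posterior} expectation that picks up the Jacobian,
\begin{equation*}
  \E[f(\theta)\mid y]=\frac{\E_{\theta'}\bigl[f(\G^{-1}(\theta'))\,J(\theta')\bigr]}{\E_{\theta'}[J(\theta')]},
\end{equation*}
whereas the pull-back expectation is simply $\E[f(\theta^\eta)]=\E_{\theta'}[f(\G^{-1}(\theta'))]$ with no Jacobian, since $\theta'=y-\eta$ is already distributed as $g$. (Consistently, your displayed pull-back density in the $\theta$ variable should carry $|\det d\G(\theta)|$, not its reciprocal $J(\G(\theta))$.) The paper then writes the difference in one line as
\begin{equation*}
  \E[f(\theta)\mid y]-\E[f(\theta^\eta)]=\E_{\theta'}\Bigl[f(\G^{-1}(\theta'))\Bigl(\frac{J(\theta')}{\E_{\theta}[J(\theta)]}-1\Bigr)\Bigr],
\end{equation*}
bounds the bracket via the triangle inequality through $J(y)$ and the Lipschitz hypothesis, and finishes with Cauchy--Schwarz exactly as you outline; the auxiliary ``Jacobian-frozen'' mean $\widetilde\mean$ is then unnecessary. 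Your suspicion about the exponent is also well-founded: the Gaussian moment $(\E_{\theta'}\|\theta'-y\|^{2c_1})^{1/2}$ scales like $\rho(\Sigma_\eta)^{c_1/2}$, and the factor $\sqrt{\det\Sigma_\eta}$ enters through $Z_\eta/c_3$ when one lower-bounds $\E_{\theta'}[J]=Z/Z_\eta\ge c_3/Z_\eta$.
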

\begin{proof}
The proof is in \cref{sec:app:theorm:nonlinear:proof}.
\end{proof}

We have the following theorem about the pull back distribution of $\theta^{\eta}$ and the ExKI,
\begin{theorem}
\label{th:ExKI}
Assume $N_{\theta} = N_y$ and $\G$ is a bijection. Any stationery mean and covariance $\mean_{*}, \Cov_{*}$ obtained by the ExKI, which satisfy that both $d\G(m_{*})$ and $\Cov_{*}$ are non-singular, then 
\begin{equation}
\label{eq:ExKI-stationery1}
    \mean_{*} = \G^{-1}(y)  \quad \textrm{and} \quad 
    \Cov_*^{-1} = d\G(\mean_{*})^T\Sigma_{\eta}^{-1}d\G(\mean_{*}).
\end{equation}
And they are the mean and covariance estimation of the pull-back random variable $\theta^{\eta}$ obtained by the extended Kalman filter.
\end{theorem}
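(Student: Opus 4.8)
\emph{Proof sketch.} The plan is to substitute the hyperparameter choice $\Sigma_\omega=\Cov_n$, $\Sigma_\nu=2\Sigma_\eta$ of~\cref{eq:hyperparameters} into the ExKI iteration~\cref{eq:ExKI-1.1,eq:ExKI-1.2} and impose stationarity, $\mean_{n+1}=\mean_n=:\mean_*$ and $\Cov_{n+1}=\Cov_n=:\Cov_*$. Writing $F:=d\G(\mean_*)$, the prediction step collapses to $\pmean_*=\mean_*$ and $\pCov_*=\Cov_*+\Sigma_\omega=2\Cov_*$, so $\py_*=\G(\mean_*)$, $\pCov_*^{\theta p}=2\Cov_*F^{T}$ and $\pCov_*^{pp}=2F\Cov_*F^{T}+2\Sigma_\eta$. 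The stationary mean equation then reduces to $\pCov_*^{\theta p}(\pCov_*^{pp})^{-1}\bigl(y-\G(\mean_*)\bigr)=0$; since $\Cov_*$ and $F$ are non-singular by hypothesis, $\pCov_*^{\theta p}$ is non-singular, and $\pCov_*^{pp}\succ0$ because $\Sigma_\eta\succ0$, hence invertible. Therefore $y=\G(\mean_*)$, and as $\G$ is a bijection, $\mean_*=\G^{-1}(y)$.

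Next I would extract the covariance. The update $\Cov_*=\pCov_*-\pCov_*^{\theta p}(\pCov_*^{pp})^{-1}{\pCov_*^{\theta p}}{}^{T}$ can be recast, via the matrix inversion lemma (valid because $\pCov_*=2\Cov_*$ is non-singular), as $\Cov_*^{-1}=\pCov_*^{-1}+F^{T}\Sigma_\nu^{-1}F$. Substituting $\pCov_*=2\Cov_*$ and $\Sigma_\nu=2\Sigma_\eta$ gives $\Cov_*^{-1}=\frac12\Cov_*^{-1}+\frac12 F^{T}\Sigma_\eta^{-1}F$, whence $\Cov_*^{-1}=F^{T}\Sigma_\eta^{-1}F=d\G(\mean_*)^{T}\Sigma_\eta^{-1}d\G(\mean_*)$, which is~\eqref{eq:ExKI-stationery1}.

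For the last assertion I would apply the extended Kalman filter --- a single first-order linearization --- to the pull-back variable $\theta^{\eta}=\G^{-1}(y-\eta)$ with $\eta\sim\N(0,\Sigma_\eta)$. Linearizing $\G^{-1}$ about the mean $y$ of its argument, $\theta^{\eta}\approx\G^{-1}(y)-d\G^{-1}(y)\,\eta$, and since $d\G^{-1}(y)=\bigl(d\G(\G^{-1}(y))\bigr)^{-1}=F^{-1}$ (using $\G^{-1}(y)=\mean_*$ from the previous step), the resulting EKF estimates are the mean $\G^{-1}(y)=\mean_*$ and covariance $F^{-1}\Sigma_\eta F^{-T}=(F^{T}\Sigma_\eta^{-1}F)^{-1}=\Cov_*$, exactly the ExKI stationary values.

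The computation is almost entirely algebraic once the hyperparameter substitution is made, so there is no deep obstacle; the steps that require care are (i) the non-singularity chain that justifies cancelling $\pCov_*^{\theta p}(\pCov_*^{pp})^{-1}$ in the mean equation, (ii) the correct application of the matrix inversion lemma to the covariance update --- in particular that $\pCov_*$ inherits non-singularity from $\Cov_*$ --- and (iii) pinning down the precise meaning of ``the extended Kalman filter estimate of $\theta^{\eta}$'' (a single linearization of $\G^{-1}$ at $y$), so that the final sentence of the theorem has content beyond a restatement of~\eqref{eq:ExKI-stationery1}.
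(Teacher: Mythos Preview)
Your proposal is correct and follows essentially the same approach as the paper: substitute the hyperparameters~\eqref{eq:hyperparameters}, impose stationarity, use the non-singularity of $\Cov_*$ and $d\G(\mean_*)$ to force $y=\G(\mean_*)$, apply the Sherman--Morrison--Woodbury identity to the covariance update, and then linearize $\G^{-1}$ at $y$ for the EKF interpretation. The only cosmetic difference is that the paper applies Woodbury to the general iteration first and then passes to the stationary limit, whereas you impose stationarity first and then invoke the matrix inversion lemma; the content is identical.
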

\begin{proof}
The proof is in \cref{sec:app:ExKI:proof}.
\end{proof}

Combining~\cref{theorm:nonlinear,th:ExKI} , we have, in the presence of small observation error, the ExKI is able to well approximate the mean and covariance of the posterior distribution with an uninformative prior under certain conditions on the operator $\G$. 
The UKI~\cite{UKI} further applies averaging on the inverse function $\G$ and its gradient $d\G$, which leads to $\F_u\G$ and $\F_u d \G$. 
By the analogy to the ExKI, the UKI is also able to approximate the mean and covariance of the posterior distribution with an uninformative prior under similar conditions.

\begin{remark}
Numerical studies presented in~\cref{ssec:app:darcy,ssec:app:NS} indicate the UKI delivers good Gaussian approximation to the posterior distribution also for over-determined inverse problems.
\end{remark}

\begin{remark}
Numerical studies presented in~\cref{ssec:app:1-p} indicate the injection and the Lipschitz continuity of $\G$ are necessary for accurate posterior mean and covariance approximation.
\end{remark}

\section{Applications}
\label{sec:app}
In this section, 
we present numerical study of the UKI equipping with the stochastic dynamics~\cref{eq:hyperparameters} for approximating the posterior distribution.
To be concrete, we initialize UKI with $\theta_0 \sim \N(\mean_0, \Cov_0)$.
The theoretical results in~\cref{sec:Theorem}  indicate that the converged mean and covariance are independent of the initial guess. 
However, the UKI converges faster, when the initial guess is closer to the posterior mean and  covariance. 
We set $\mean_0 $ to be the prior mean $r_0$. Since the prior covariance is generally very large, the initial covariance $C_0$ is not necessarily the prior covariance.
Specific choices of $r_0$ and $\Cov_0$ will differ between examples and will be spelled out in each example. 
The reference posterior distributions are computed with the Markov Chain Monte Carlo method~(MCMC), specifically the random walk Metropolis algorithm~\cite{gelman1997weak}, with sufficient iterations.
For comparison, we also apply the affine invariant Markov Chain Monte Carlo ensemble sampler~\cite{goodman2010ensemble,foreman2013emcee} (emcee),  
the Sequential Monte Carlo method~\cite{del2006sequential,kantas2014sequential,beskos2015sequential}~(SMC), and ensemble transform Kalman inversion~\cite{bishop2001adaptive,wang2003comparison,UKI2}~(ETKI).

\begin{itemize}
    \item Nonlinear 1-parameter model problems: the behavior of the UKI is studied on different forward maps, including discontinuous functions, non-injective functions, and etc.
    \item Nonlinear 2-parameter model problem: this is a counterexample against the ensemble Kalman filter~\cite{ernst2015analysis,garbuno2020interacting}. This problem demonstrates the effectiveness of the modified stochastic dynamical system, which enables the UKI to obtain accurate posterior approximation. The comparison with emcee, SMC, and ETKI is also presented.
    \item Nonlinear high dimensional model problem: this is a well-studied Darcy flow inverse problem. The comparison between UKI and MCMC is presented, good agreement in terms of mean and covariance estimations is achieved.
    \item Navier-Stokes problem: this is a model data assimilation problem in oceanography and meteorology, where the initial condition is recovered from noisy observations of the vorticity field at later times.
\end{itemize}

The code is accessible online:
\begin{center}
  \url{https://github.com/Zhengyu-Huang/InverseProblems.jl}
\end{center}

\subsection{Nonlinear 1-Parameter Model Problem}
\label{ssec:app:1-p}
The performance of the UKI is studied  numerically on the following nonlinear 1-parameter problems:
\begin{itemize}
    \item Exponential problem:
     \begin{equation*}
    y = \G(\theta) + \eta \qquad \G(\theta) = \exp\Big(\frac{\theta}{10}\Big).
    \end{equation*}
    \item Quadratic multimodal problem:
    \begin{equation*}
    y = \G(\theta) + \eta \qquad \G(\theta) = \theta^2. 
    \end{equation*}
    \item Cubic problem:
     \begin{equation*}
    y = \G(\theta) + \eta \qquad \G(\theta) = \theta^3. 
    \end{equation*}
    \item Sign discontinuous problem:
     \begin{equation*}
    y = \G(\theta) + \eta \qquad \G(\theta) = \textrm{sign}(\theta) + \theta^3. 
    \end{equation*}
    \item Hyperbola discontinuous problem:
     \begin{equation*}
    y = \G(\theta) + \eta \qquad \G(\theta) = \frac{1}{\theta}.
    \end{equation*}
\end{itemize}
We assume the observation is generated as $y = \G(\theta_{ref})$, where the reference solution $\theta_{ref} = 2$.
And the observation error is $\eta \sim \N(0, 0.1^2)$. For Bayesian inverse problems, we assume the prior distribution is $\N(1, 10^2)$. It is worth noticing, for these inverse problems, the observation error is small, and the prior is almost uninformative. However, $\G$ is not injective for the quadratic multimodal problem, and the Lipschitz property does not hold for these discontinuous problems.

The reference posterior distribution is approximated by the MCMC method with a step size $1.0$ and $5\times10^6$ samples (with a $10^6$ sample burn-in period). 
As for the UKI, $2$ initial conditions are considered, which are $\theta_0\sim\N(-1, 0.5^2)$ and $\theta_0\sim\N(1, 0.5^2)$. We find that only the hyperbola discontinuous problem is sensitive to the initial covariance for the UKI. 

The approximated posterior distributions are presented in \cref{fig:1-parameter}. Here the UKI results are from the 20th iteration. For the quadratic multimodal problem, the posterior distribution is a multimodal distribution, therefore, the UKI can only capture one modal, that is close to the initial condition $\theta_0$. For the hyperbola discontinuous problem, the UKI initialized on the different branch from $\theta_{ref}$ diverges to $-\infty$; but the UKI initialized on the same branch from $\theta_{ref}$ converges to $\theta_{ref}$ with good covariance estimation. This reveals the gradient-based nature of UKI, in contrast to the sampling-based nature of MCMC. For other cases, the distributions obtained by the UKI match well with the distributions delivered by the MCMC. It is worth mentioning, the UKI requires only 20 iterations~(60 forward solver evaluations), which is much cheaper than the MCMC method. 

\begin{figure}
     \centering
     \begin{subfigure}[b]{0.325\textwidth}
         \centering
         \includegraphics[width=\textwidth]{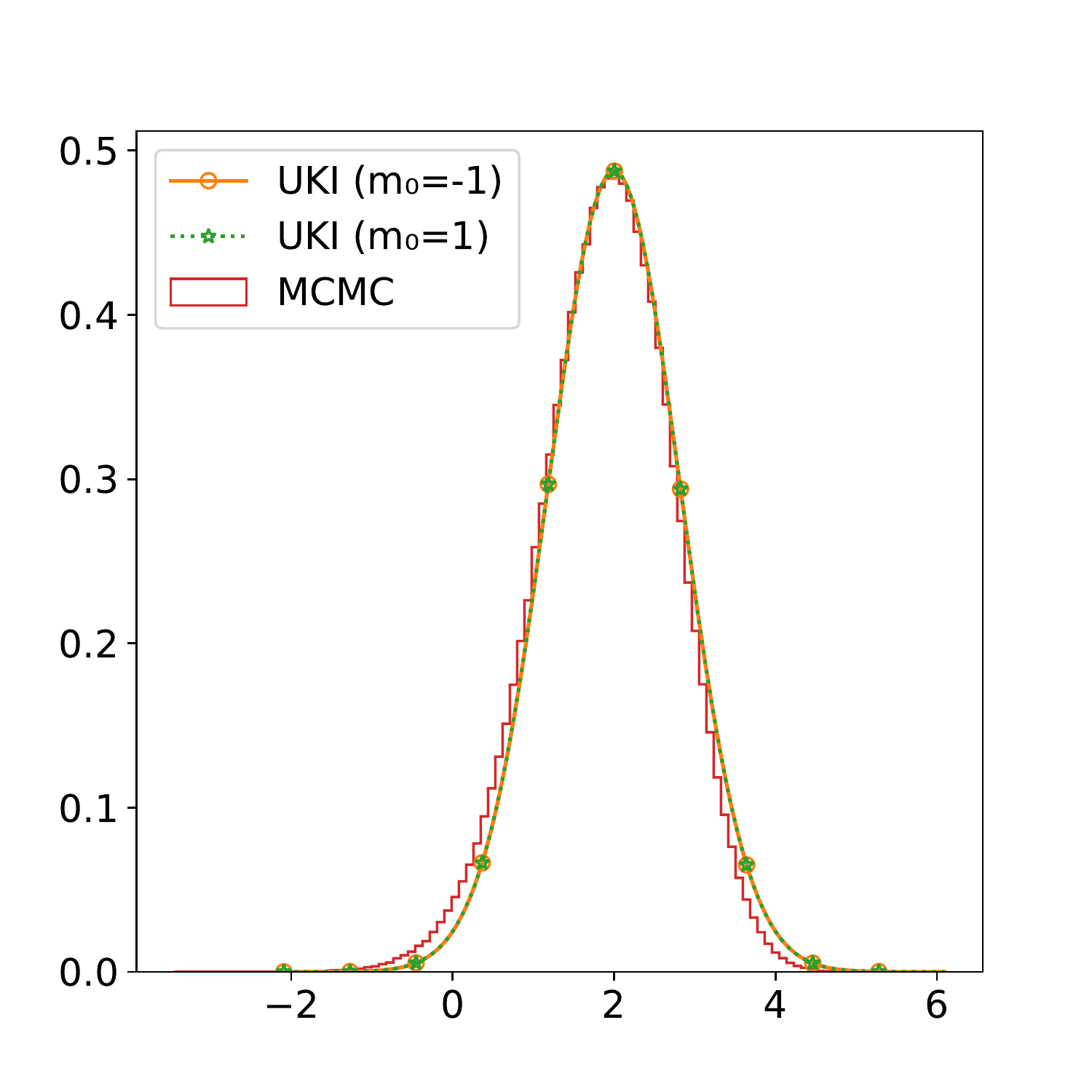}
         \caption{$\G(\theta)= \exp\Big(\frac{\theta}{10}\Big)$}
     \end{subfigure}
     \begin{subfigure}[b]{0.325\textwidth}
         \centering
         \includegraphics[width=\textwidth]{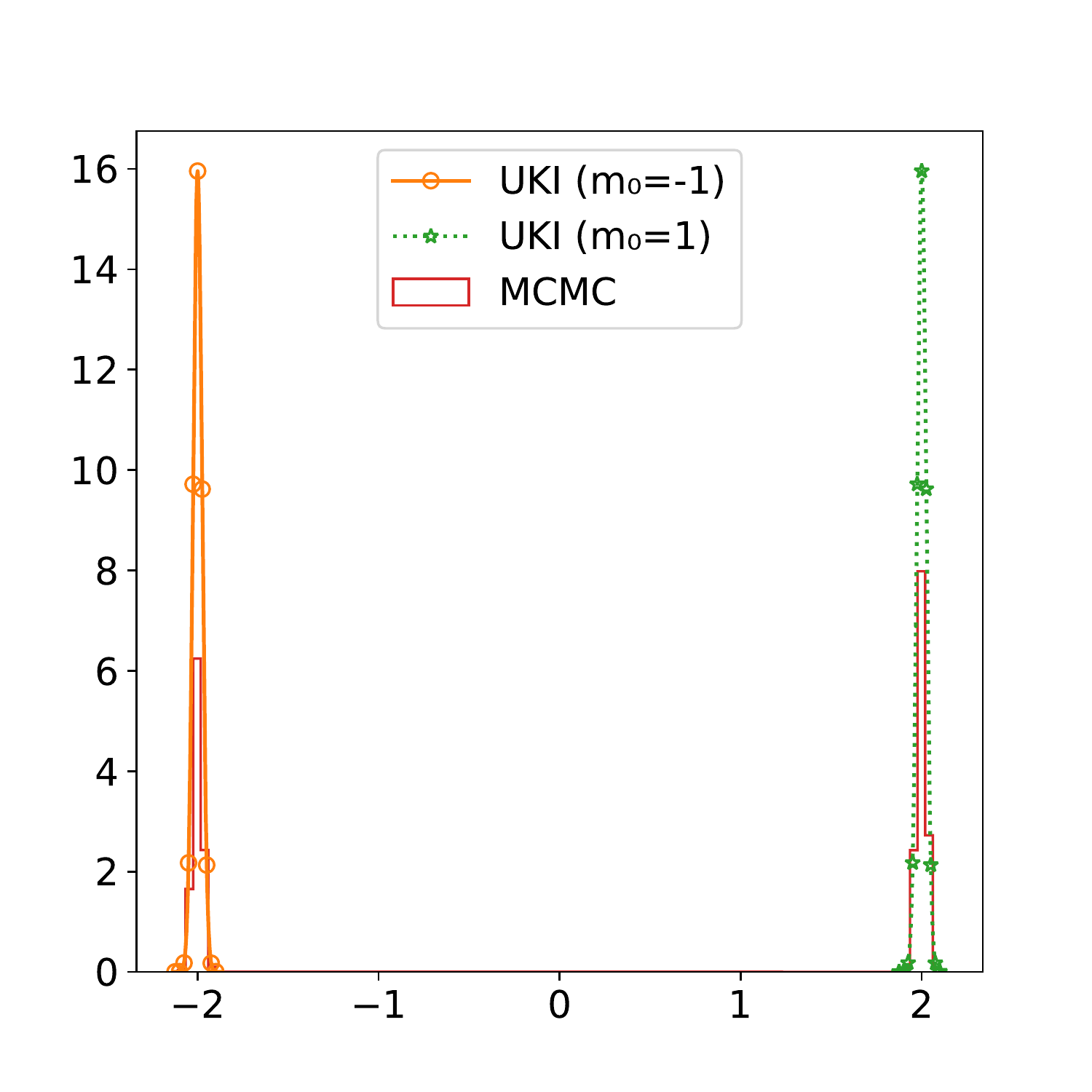}
         \caption{$\G(\theta) = \theta^2 $}
     \end{subfigure}
     \begin{subfigure}[b]{0.325\textwidth}
         \centering
         \includegraphics[width=\textwidth]{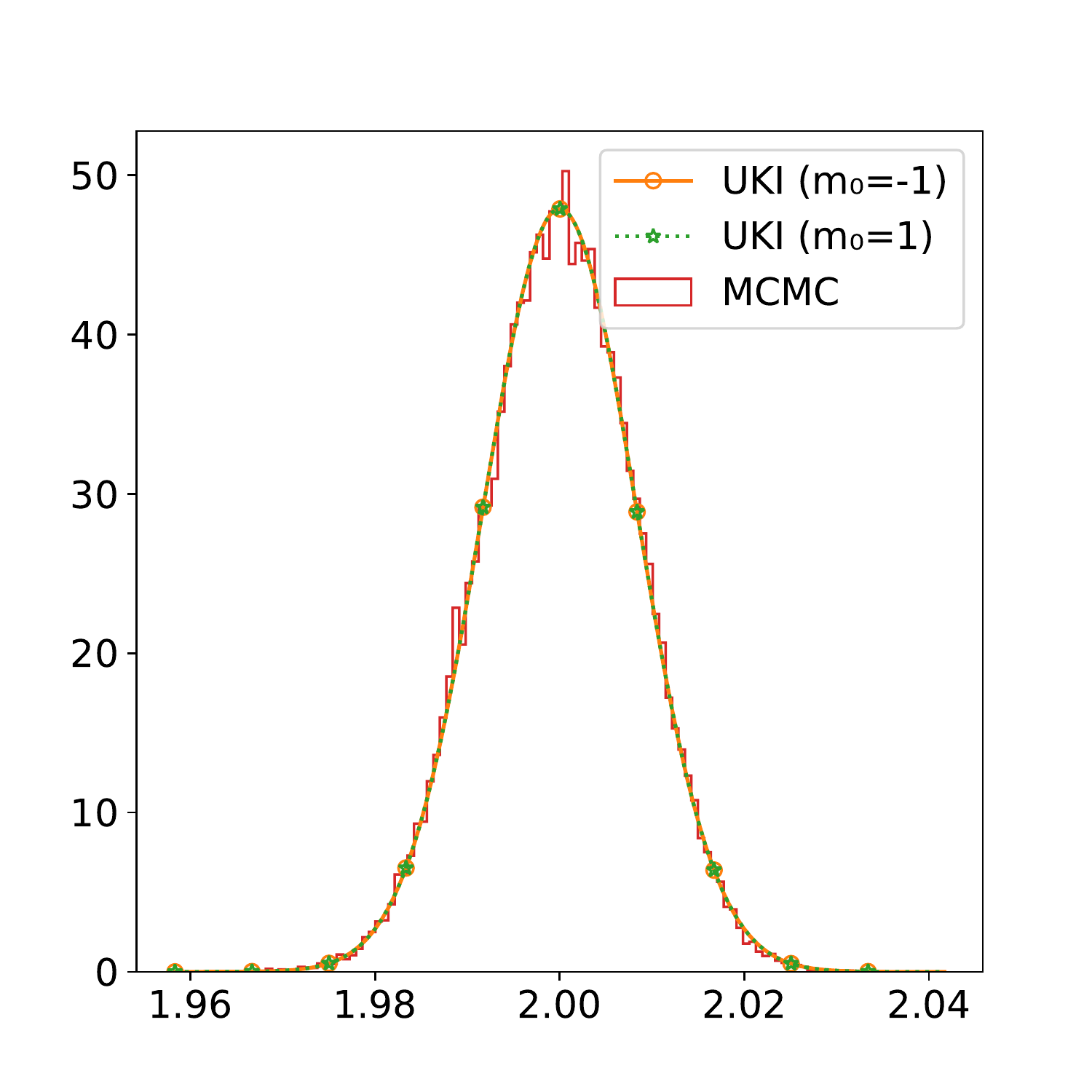}
         \caption{$\G(\theta) = \theta^3 $}
     \end{subfigure}
      \hfill
     \begin{subfigure}[b]{0.325\textwidth}
         \centering
         \includegraphics[width=\textwidth]{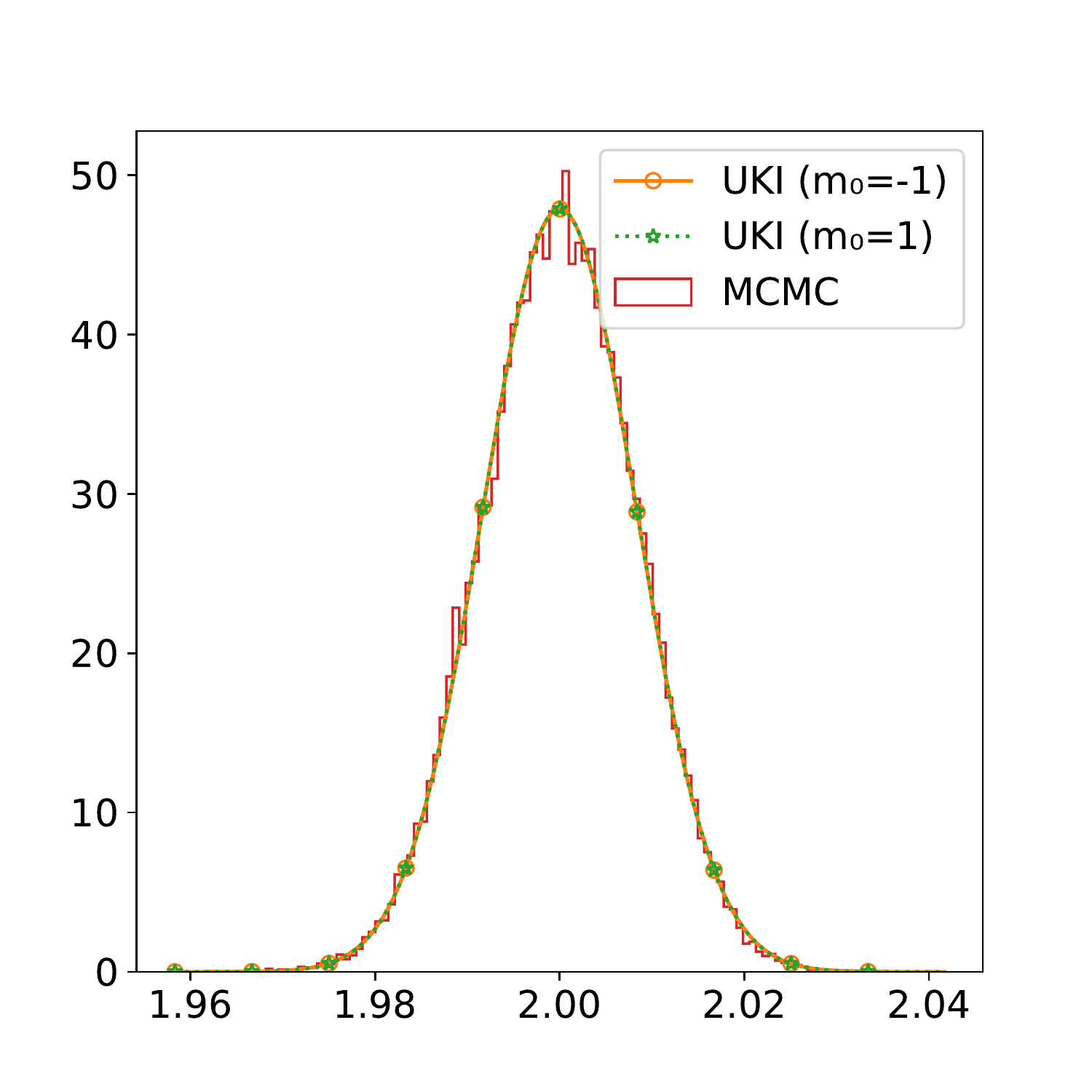}
         \caption{$\G(\theta) = \textrm{sign}(\theta) + \theta^3 $}
     \end{subfigure}
     \begin{subfigure}[b]{0.325\textwidth}
         \centering
         \includegraphics[width=\textwidth]{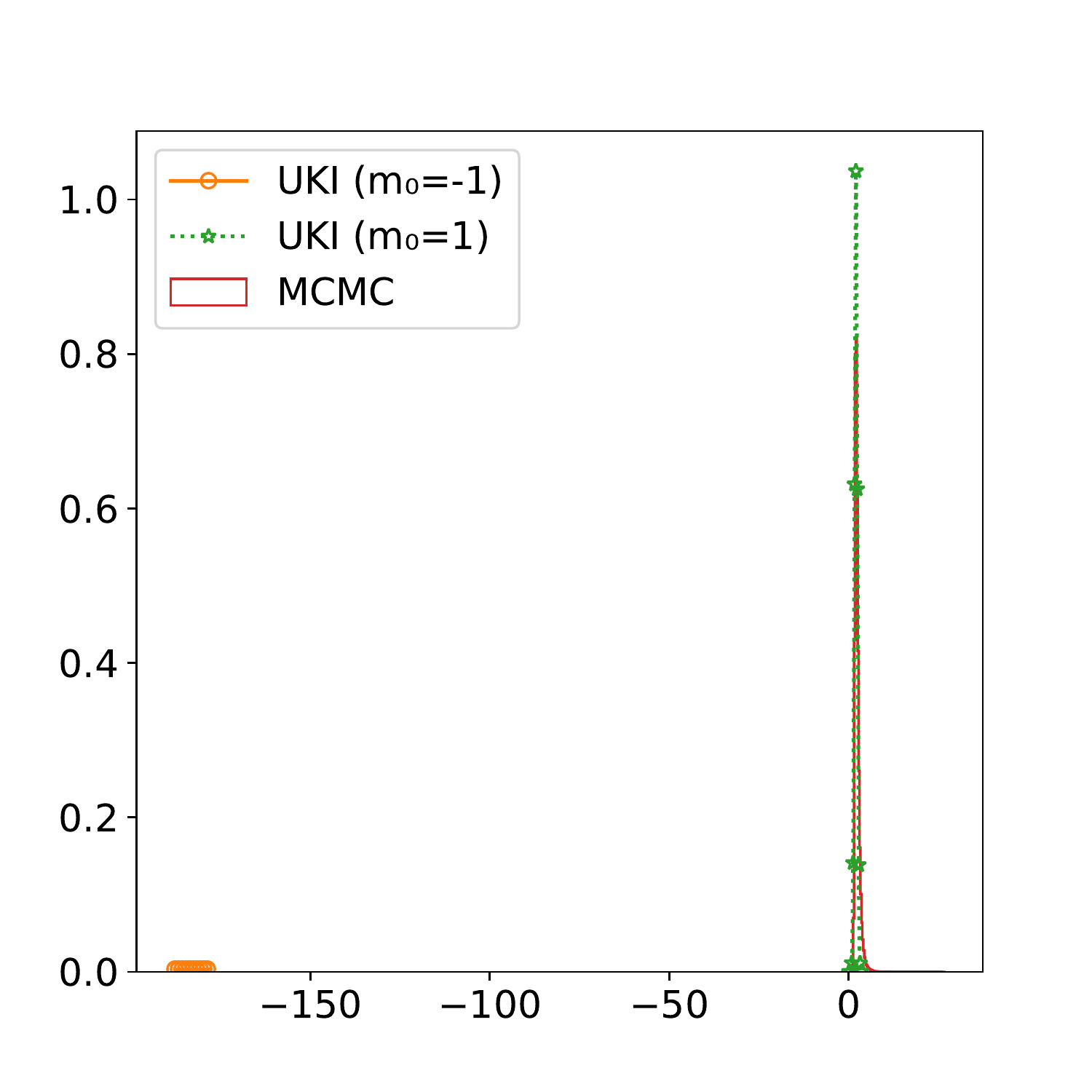}
         \caption{$\G(\theta) = \frac{1}{\theta}$}
     \end{subfigure}
     \begin{subfigure}[b]{0.325\textwidth}
         \centering
         \includegraphics[width=\textwidth]{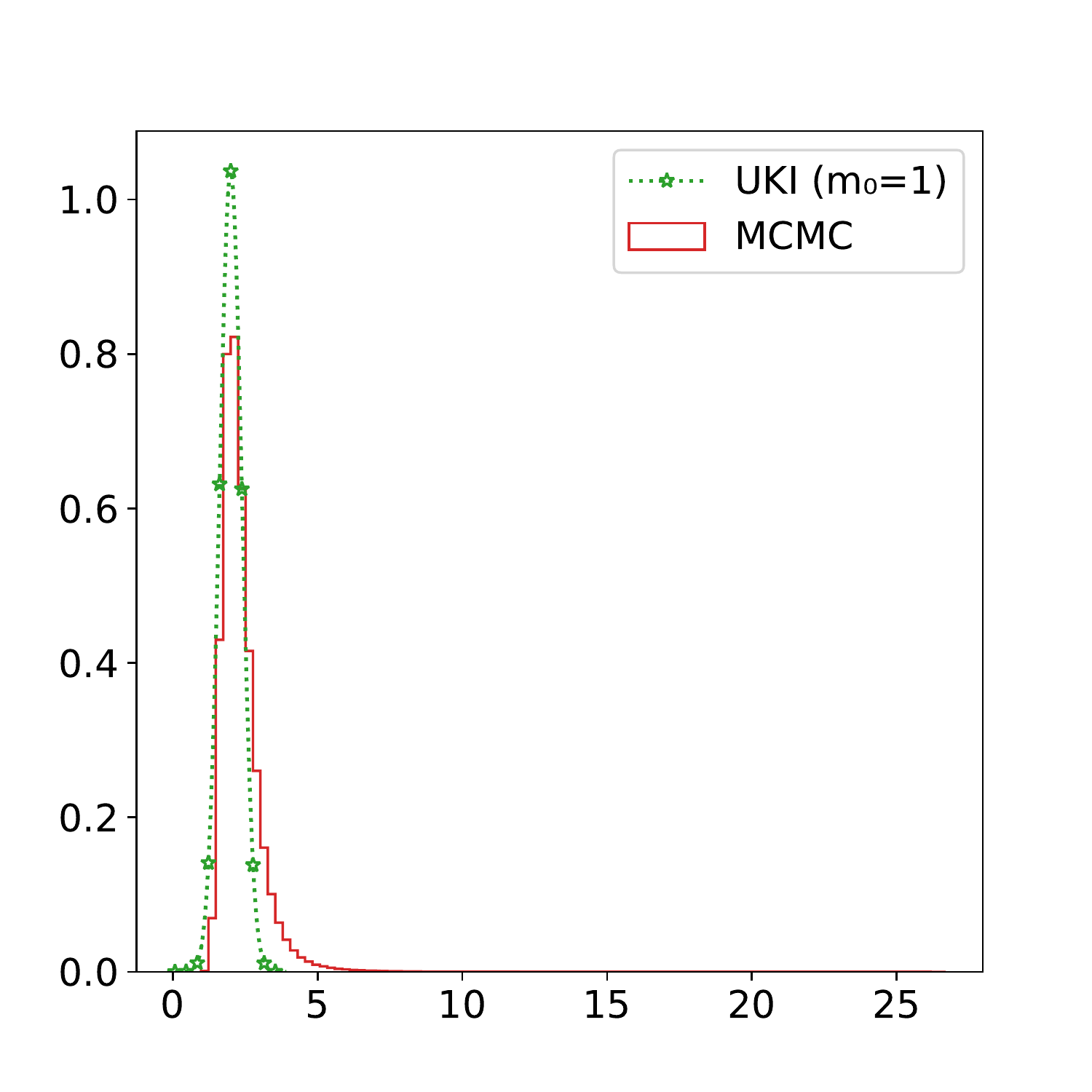}
         \caption{$\G(\theta) = \frac{1}{\theta}$ (Zoomed view)}
     \end{subfigure}
     \hfill
        \caption{Estimated posterior distributions by different approaches for different nonlinear 1-parameter model problems.}
        \label{fig:1-parameter}
\end{figure}

\subsection{Nonlinear 2-Parameter Model Problem}
\label{ssec:app:2-p}
The numerical experiment considered here is a counterexample against the ensemble Kalman filter, which is proposed in~\cite{ernst2015analysis} and further used in \cite{herty2018kinetic,garbuno2020interacting}. Consider the one-dimensional elliptic boundary-value problem
\begin{equation}
    -\frac{d}{dx}\Big(\exp(\theta_{(1)}) \frac{d}{dx}p(x)\Big) = 1, \qquad x\in[0,1]
\end{equation}
with boundary conditions $p(0) = 0$ and $p(1) = \theta_{(2)}$. The solution for this problem is given by  
\begin{equation*}
    p(x) = \theta_{(2)} x + \exp(-\theta_{(1)})\Big(-\frac{x^2}{2} + \frac{x}{2}\Big).
\end{equation*}
The inverse problem is to solve for $\theta = (\theta_{(1)},\, \theta_{(2)})^T$ with the observations $y = (p(x_1),\,p(x_2))^T$ at $x_1=0.25$ and $x_2=0.75$.
The Bayesian inverse problem is formulated as 
\begin{equation*}
    y = \G(\theta) + \eta \qquad \textrm{and} \qquad 
    \G(\theta) = \begin{bmatrix}
    p(x_1, \theta)\\
    p(x_2, \theta)
    \end{bmatrix},
\end{equation*}
here $\G(\theta)$ is the forward model operator.   The observation is $y=(27.5, 79.7)^T$ with observation error $\eta\sim\N(0, 0.1^2\I)$. And the prior is $\N(0,  \texttt{diag}\{1, 10^2\})$.

The reference posterior distribution is approximated by the MCMC method with a step size $1.0$ and  $5\times10^6$ samples (with a $10^6$ sample burn-in period).
For the UKI, the initial condition is $\theta_0 \sim \N(0, \texttt{diag}\{1, 10^2\})$.
The posterior distributions obtained by the UKI at the 5th, 10th, and 15th iterations are depicted in~\cref{fig:elliptic_cov1}-top. The mean converges efficiently to the true value $\theta_{ref}$ and the covariance obtained by the UKI matches well with that obtained by MCMC.  
For comparison, we also apply the UKI without updating the evolution error covariance $\Sigma_{\omega}$, and therefore  $\Sigma_{\omega}= \Cov_0$. The posterior distributions obtained at the 5th, 10th, and 15th iterations  are depicted in~\cref{fig:elliptic_cov1}-bottom.  The converged mean estimation matches well with $\theta_{ref}$, but the converged covariance is not accurate. This highlights the significance of adaptively updating $\Sigma_{\omega}$.

\begin{figure}[ht]
\centering
     \includegraphics[width=0.8\textwidth]{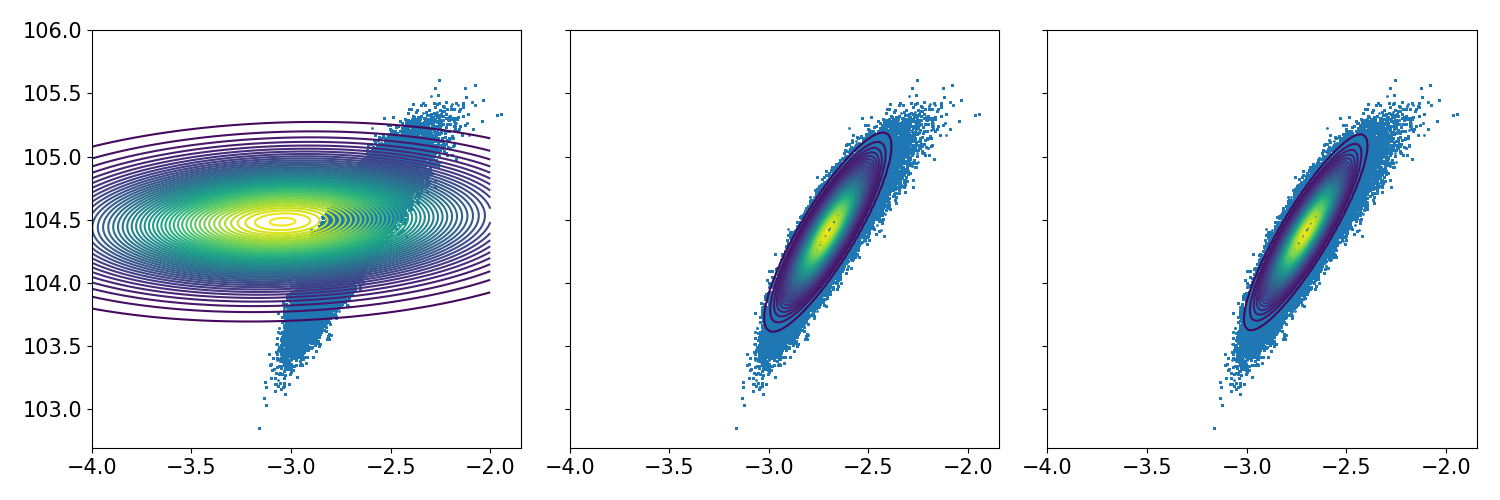}
     \includegraphics[width=0.8\textwidth]{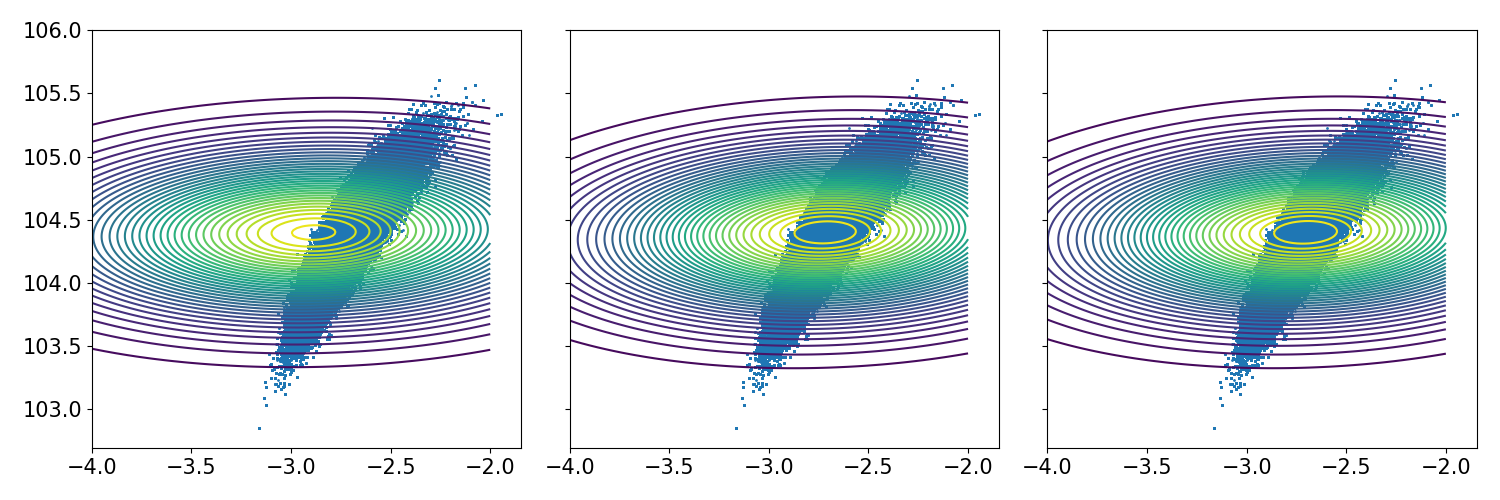}
    \caption{Contour plot: posterior distributions obtained by UKI~with (top)/without (bottom) updating the evolution error covariance $\Sigma_{\omega}$, at 5th, 10th, and 15th iterations~(from left to right); blue dots: reference posterior distribution obtained by MCMC for the 2-parameter model problem. x-axis is for $\theta_{(1)}$ and y-axis is for $\theta_{(2)}$.}
    \label{fig:elliptic_cov1}
\end{figure}

Moreover, we also report the  behaviors of emcee, SMC and ETKI on this problem.  They are all ensemble-based approaches. The ensemble size is set to be $J=100$, and the initial ensemble is drawn from  $\N(0, \texttt{diag}\{1, 10^2\})$.
For the emcee, the number of subensemble is 2 and the stretch move is used.  The estimated posterior distributions obtained with 100 iterations and 500 iterations are depicted in \cref{fig:elliptic_comp}~left.  
For the SMC, we choose the resampling threshold $M_{\textrm{thresh}} = 60$, the MCMC mover is the  random walk Metropolis algorithm with a step size $1.0$. The sequence of "bridging" densities, which enable us to connect the prior distribution to the posterior distribution, is $\mu_{n} \propto \exp\big(-\frac{n}{N} \Phi(\theta, y)\big) \mu_{0}$ with $n = 0,\, 1,\, \cdots,\, N$. The estimated posterior distributions with $N = 100$ and $N = 500$ are depicted in \cref{fig:elliptic_comp}~middle.
Both Monte Carlo samplers require more iterations and larger ensemble size compared with the number of $\sigma$-points in the UKI ($2N_{\theta} + 1$). However, it is worth mentioning these approaches are able to sample arbitrary distributions, the UKI is specifically designed to estimate posterior mean and covariance from the Bayesian measurement error model~\eqref{eq:KI} with small observation errors.
For the ETKI, it suffers divergence for the proposed stochastic dynamic system~\eqref{eq:dynamics}\eqref{eq:hyperparameters}. We revert to the widely used dynamic system~\cite{gu2006ensemble,iglesias2013ensemble, schillings2017analysis,schillings2018convergence}, with $\Sigma_{\omega} = 0$ and $\Sigma_{\nu} = \Sigma_{\eta} $. The estimated  posterior distributions obtained at the 1st iteration and the 30th iteration are depicted in~\cref{fig:elliptic_comp}. The ETKI suffers ensemble collapse, and hence, fails to deliver meaningful uncertainty information. A similar phenomenon~\cite{schillings2017analysis,schillings2018convergence,herty2018kinetic,garbuno2020interacting} has been reported  and discussed for its stochastic variant---Ensemble Kalman inversion.

\begin{figure}[ht]
\centering
    \includegraphics[width=0.8\textwidth]{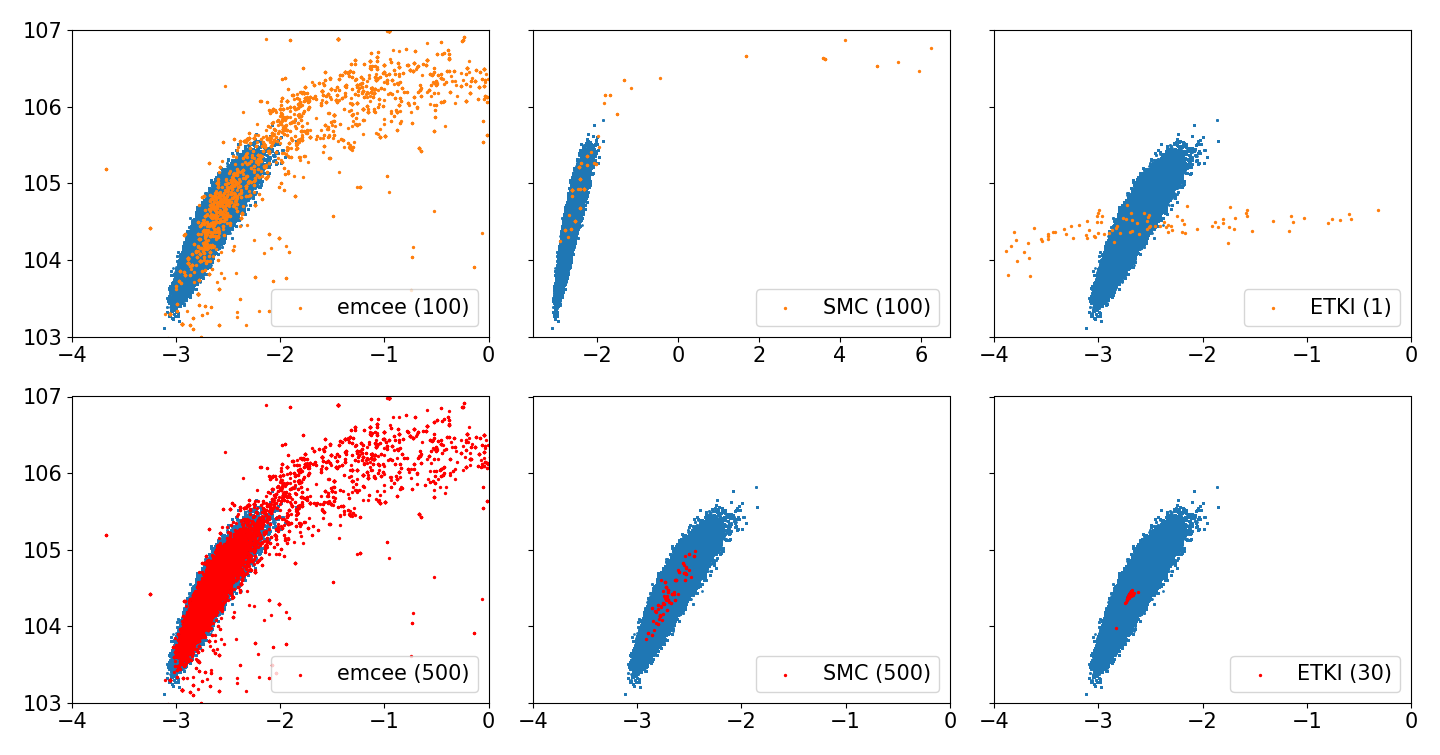}
    \caption{Estimated posterior distributions obtained by emcee with 100 iterations and 500 iterations (left); SMC with 100 iterations and 500 iterations (middle); ETKI at the 1st iteration and the 30th iteration (right) for the 2-parameter model problem. Blue dots represent the reference posterior distribution obtained by MCMC. x-axis is for $\theta_{(1)}$ and y-axis is for $\theta_{(2)}$.}
   \label{fig:elliptic_comp}
\end{figure}

\subsection{Nonlinear High-Dimensional Model Problem}
\label{ssec:app:darcy}
A similar  one-dimensional elliptic boundary-value problem but with high-dimensional parameters is considered in this section.
The equation describes the pressure field $p(x)$ in a porous medium defined by a positive random permeability field $a(x,\theta)$:
\begin{equation}
    -\frac{d}{dx}\Big(a(x, \theta) \frac{d}{dx}p(x)\Big) = f(x), \qquad x\in[0,1].
\end{equation}
Here  Dirichlet boundary conditions on the pressure are applied with $p(0) = 0$ and $p(1) = 0$, and $f$ defines the source of the fluid:
\begin{align*}
    f(x) = \begin{cases}
              1000 & 0 \leq x \leq \frac{1}{2}\\
              2000 & \frac{1}{2} < x \leq 1\\
            \end{cases}. 
\end{align*}

The random log-permeability field $\log a(x, \theta)$, depending on parameters $\theta\in\R^{N_\theta}$, is modeled 
as a log-Gaussian field with covariance operator
$$\mathsf{C} = (-\Delta + \tau^2 )^{-d},$$
where $-\Delta$ denotes the Laplacian on $D$ subject to homogeneous Neumann boundary conditions on the space of spatial-mean zero functions, 
$\tau > 0$ denotes the inverse length scale of the random field and $d  > 0$ determines its regularity ($\tau=3$ and $d=1$ in the present study\footnote{$d=1$ ensures the eigenvalues do not decay too fast, and hence all parameters are effective.}).

The log-Gaussian field is approximated by the following Karhunen-Lo\`{e}ve~(KL) expansion
\begin{equation}
\label{eq:KL-2d}
    \log a(x,\theta) = \sum_{l=1}^{+\infty} \theta_{(l)}\sqrt{\lambda_l} \psi_l(x),
\end{equation}
and the eigenpairs are of the form
\begin{equation*}
    \psi_l(x) = \sqrt{2}\cos(\pi l x) \quad \textrm{ and }
                 \quad \lambda_l = (\pi^2 l^2 + \tau^2)^{-d},
\end{equation*}
and $\theta_{(l)} \sim \N(0,1)$ i.i.d.  
The forward problem is solved by the finite difference method with $512$ grid points.

For the inverse problem, we generate a truth random field $\log a(x,\theta_{ref})$ with $N_{\theta} = 32$ and $\theta_{ref} \sim \N(0, \I^{32})$, which consists of the first $32$ KL modes~(See~\cref{fig:darcy-1d-ref}-left). 
The observation $y_{ref}$ consists of pointwise measurements of the pressure value $p(x)$ at $63$ equidistant points in the domain~(See~\cref{fig:darcy-1d-ref}-right), with the observation error $\eta \sim \N(0, 0.1^2\I)$. The prior is $\N(0, 10^2\I)$, where the covariance is large, and therefore the prior is almost uninformative.

\begin{figure}[ht]
\centering
    \includegraphics[width=0.48\textwidth]{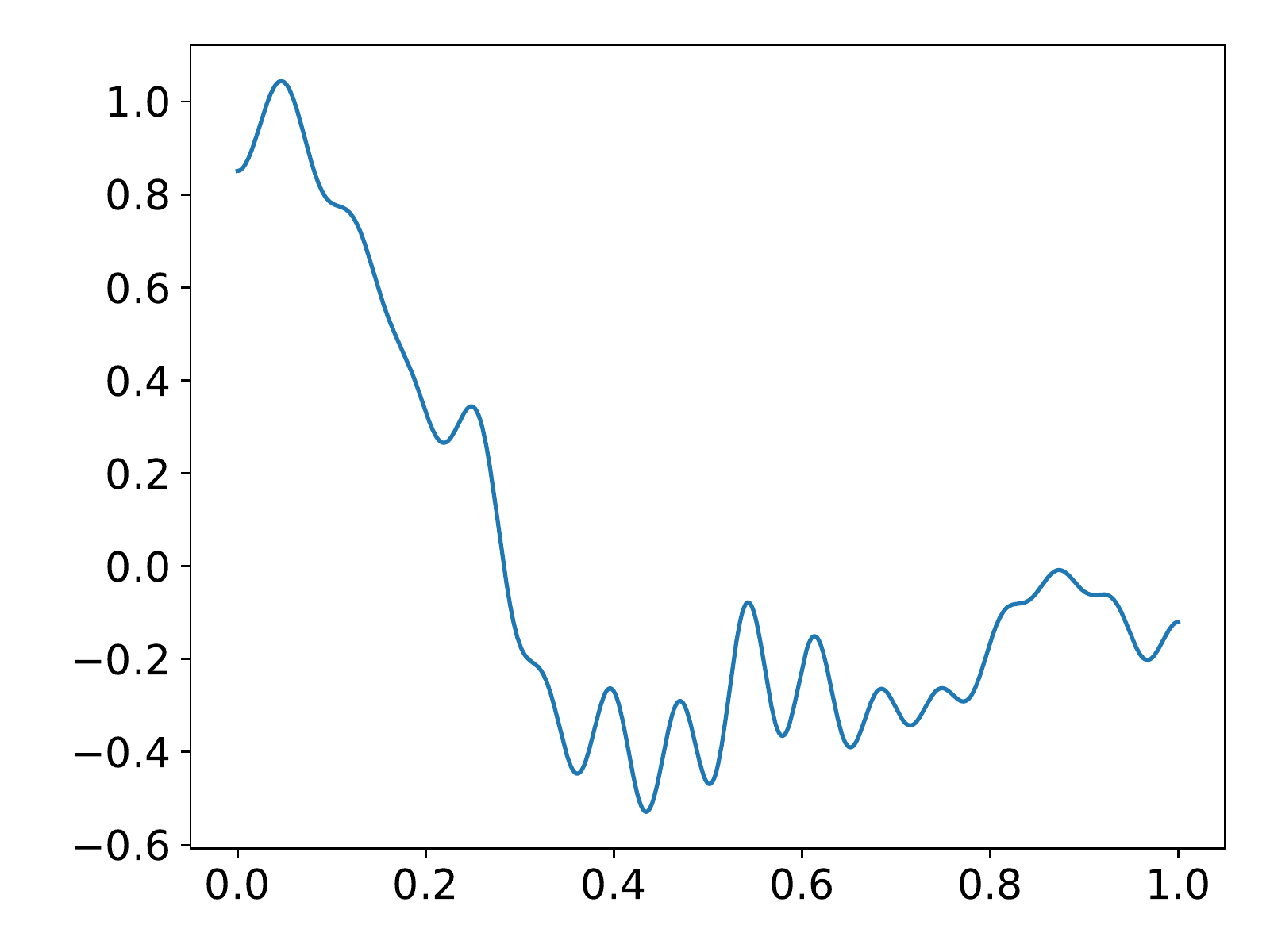}
    \includegraphics[width=0.48\textwidth]{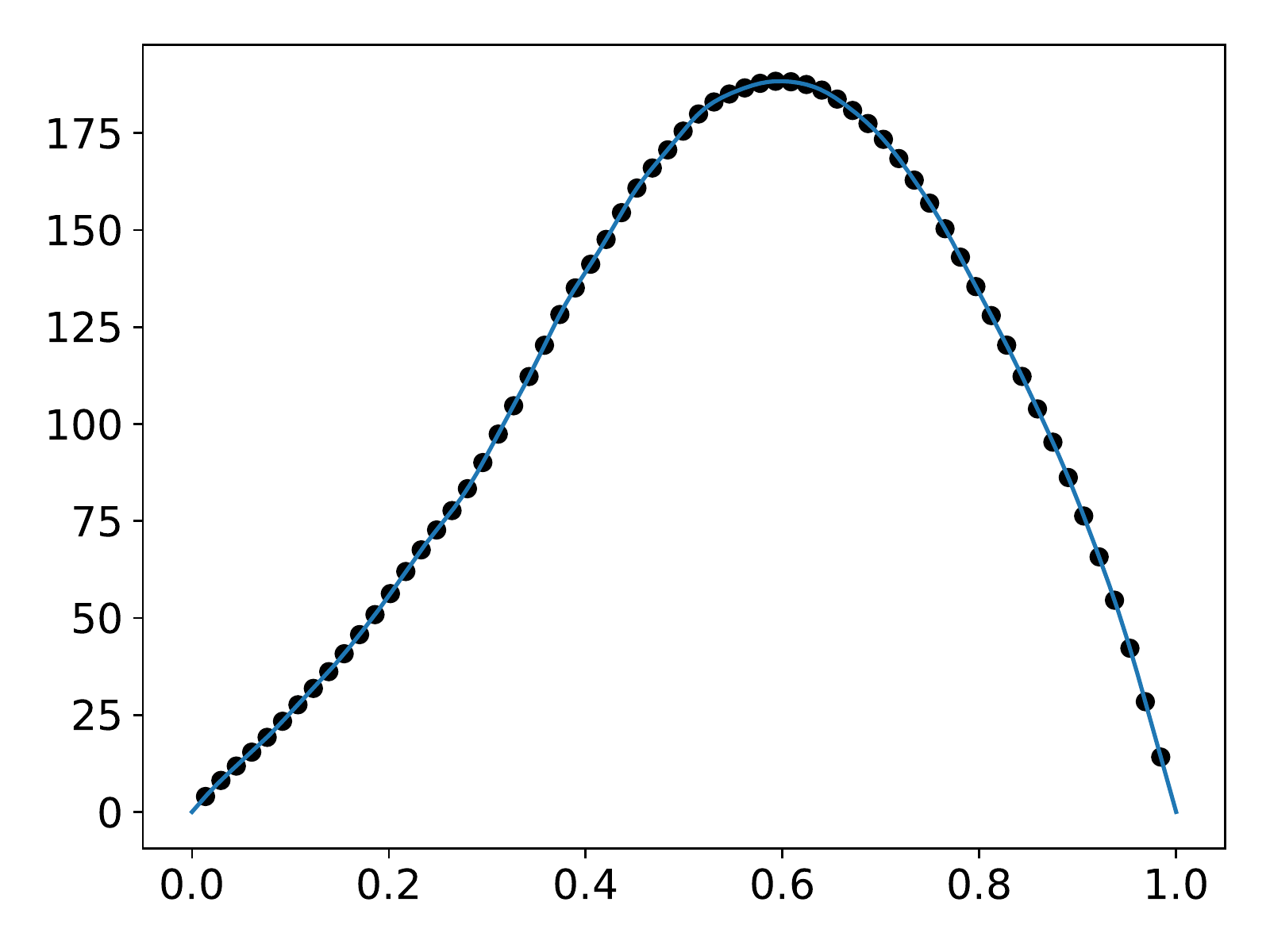}
    \caption{The reference log-permeability field $\log a(x,\theta_{ref})$~(left) and the pressure field with $63$ equidistant pointwise measurements~(right) of the Darcy flow problem.}
    \label{fig:darcy-1d-ref}
\end{figure}

The reference posterior distribution is approximated by MCMC with $4\times 10^8$ samples~(with a $2\times10^8$ sample burn-in period). In order to accelerate the convergence of MCMC, we initialize the sampling with $\theta_{ref}$ and choose a step size $10^{-2}$. For the UKI, the initial condition is $\theta_0 \sim \N(0, \I)$.

The estimated KL expansion parameters $\{\theta_{(i)}\}$ for the log-permeability field and the associated  3-$\sigma$ confidence intervals  obtained by the UKI at the 20th iteration and MCMC are depicted in~\cref{fig:darcy-1d-uq1}. Both UKI and MCMC converge to the true value $\theta_{ref}$~(the relative $L_2$ error $\displaystyle \frac{\lVert m_n - \theta_{ref}\rVert_2}{\lVert \theta_{ref}\rVert_2}$ obtained by UKI at the 20 iteration is about $10^{-2}$),
and the covariance estimations for each parameter match well with each other. 
The covariance for each pair ($\theta_{(i)},  \theta_{(j)}$) with $i \leq j$ obtained by UKI and MCMC are depicted in~\cref{fig:darcy-1d-uq2}. These pairs are sorted in an ascent order by comparing the first element and then the second. The UKI and MCMC deliver very similar covariance estimations, which conforms to the theoretical results in~\cref{subsec:nonlinear-the}.

\begin{figure}[ht]
\centering
    \includegraphics[width=0.98\textwidth]{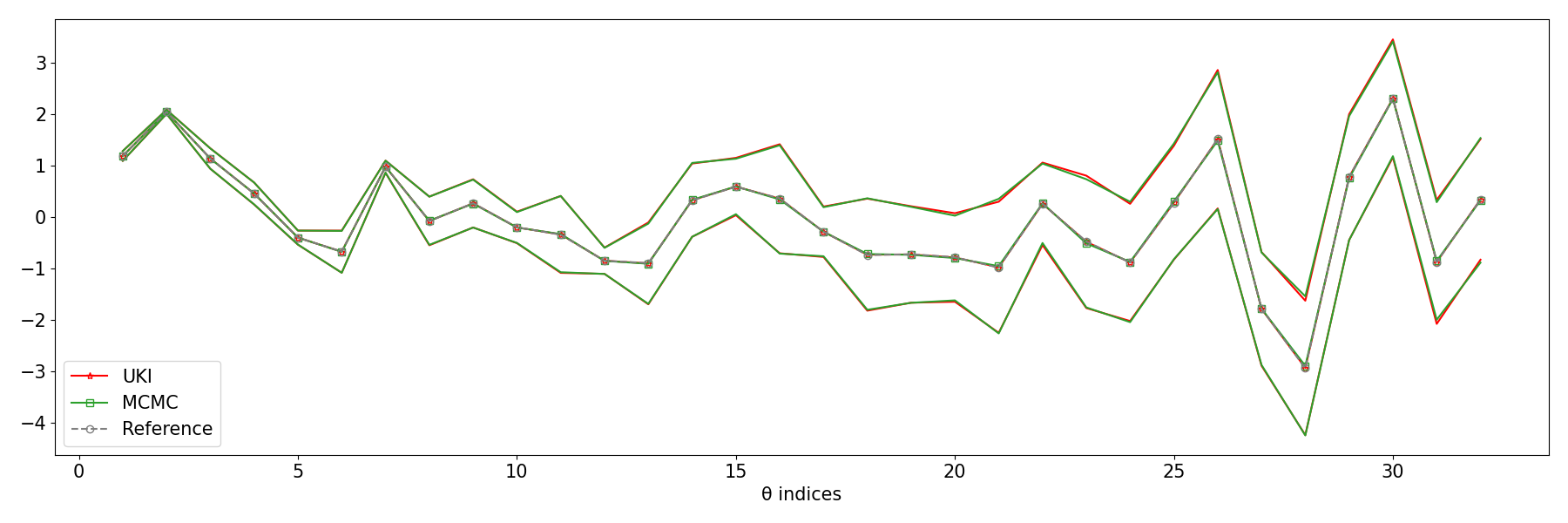}
    \caption{The estimated KL expansion parameters $\theta_{(i)}$ and the associated 3-$\sigma$ confidence intervals obtained by UKI and MCMC for the Darcy flow problem.}
    \label{fig:darcy-1d-uq1}
\end{figure}

\begin{figure}[ht]
\centering
    \includegraphics[width=0.98\textwidth]{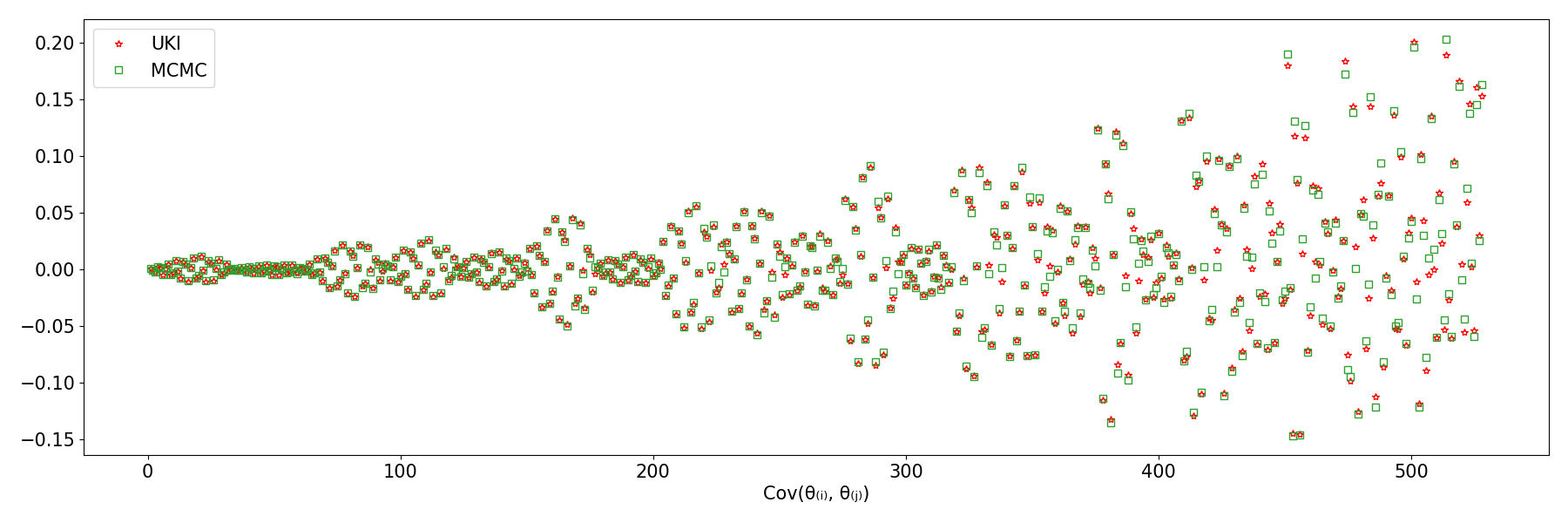}
    \caption{The estimated covariance for each parameter pair ($\theta_{(i)},  \theta_{(j)}$) obtained by UKI and MCMC for the Darcy flow problem.}
    \label{fig:darcy-1d-uq2}
\end{figure}

\subsection{Navier-Stokes Problem}
\label{ssec:app:NS}
Finally, we consider the 2D Navier-Stokes equation on a periodic domain $D = [0,2\pi]\times[0,2\pi]$:
\begin{equation*}
\begin{split}
    &\frac{\partial v}{\partial t} + (v\cdot \nabla) v + \nabla p - \nu\Delta v = 0, \\
    &\nabla \cdot v = 0, \\
    &\frac{1}{4\pi^2}\int v  = v_b,
    \end{split}
\end{equation*}
here $v$ and $p$ denote the velocity vector and the pressure, $\nu=0.01$ denotes the dynamic viscosity, and $v_b = (2\pi, 2\pi)$ denotes the non-zero mean background velocity.
The forward problem is rewritten in the vorticity-streamfunction~($\omega-\psi$) formulation:
\begin{equation*}
\begin{split}
    &\frac{\partial \omega}{\partial t} + (v\cdot\nabla)\omega - \nu\Delta\omega = 0, \\
    &\omega = -\Delta\psi \qquad \frac{1}{4\pi^2}\int\psi = 0,\\
    &v = \Big(\frac{\partial \psi}{\partial x_2}, -\frac{\partial \psi}{\partial x_1}\Big) + v_b,
\end{split}
\end{equation*}
and solved by the pseudo-spectral method~\cite{hesthaven2007spectral} on a $128\times128$ grid. To eliminating the aliasing error, the Orszag 2/3-Rule~\cite{orszag1972numerical} is applied and, therefore there are $85^2$ Fourier modes (padding with zeros). The time-integrator is the Crank–Nicolson method with $\Delta T=2.5\times 10^{-4}$. 

The random initial vorticity field $\omega_0(x, \theta)$, depending on parameters $\theta \in \R^{N_{\theta}}$, is modeled as a Gaussian random field with covariance operator $\mathsf{C} = \Delta^{-2}$, which subjects to periodic boundary conditions on the space of spatial-mean zero functions. The KL expansion of the initial vorticity field is given by 
\begin{equation}
\label{eq:NS-KL-2d}
\omega_0(x, \theta) = \sum_{l\in K} \theta^{c}_{(l)} \sqrt{\lambda_{l}} \psi^c_l  +  \theta^{s}_{(l)}\sqrt{\lambda_{l}} \psi^s_l,
\end{equation}
where $K = \{(k_x, k_y)| k_x + k_y > 0 \textrm{ or } (k_x + k_y = 0 \textrm{ and } k_x > 0)\}$, and the eigenpairs are of the form
\begin{equation*}
    \psi^c_l(x) =\frac{\cos(l\cdot x)}{\sqrt{2}\pi}\quad \psi^s_l(x) =\frac{\sin(l\cdot x)}{\sqrt{2}\pi} \quad \lambda_l = \frac{1}{|l|^{4}},
\end{equation*}
and $\theta^{c}_{(l)},\theta^{s}_{(l)}  \sim \N(0,2\pi^2)$ i.i.d. The KL expansion~\cref{eq:NS-KL-2d} can be rewritten as a sum over $\Z^{0+}$ rather than a lattice: 
\begin{equation}
\label{eq:NS-KL-1d}
    \omega_0(x,\theta) = \sum_{k\in \Z^{0+}} \theta_{(k)}\sqrt{\lambda_k} \psi_k(x),
\end{equation}
where the eigenvalues $\lambda_k$ are in descending order.

For the inverse problem, we recover the initial condition, specifically the initial vorticity field of the Navier-Stokes equation, given pointwise observations $y_{ref}$ of the vorticity field at 64 equidistant points~($N_y=128$) at $T=0.25$ and $T=0.5$~(See \cref{fig:NS-obs}). 
And $5\%$ Gaussian random noises are added to the observation, as follows,
\begin{equation*}
    y_{obs} = y_{ref} + 5\% y_{ref} \odot \N(0, \I),
\end{equation*}
here $\odot$ denotes element-wise multiplication. The observation error is $\eta \sim \N(0, 0.1^2\I)$, which corresponds to the noise level.
The initial vorticity field is generated with $100$ Fourier modes of~\cref{eq:NS-KL-2d}, and the first  50 modes  are recovered~($N_{\theta}=100$), and hence the model is misspecified and model error exists. The UKI is initialized with $\theta_0 \sim \N(0, \I)$.

\begin{figure}[ht]
\centering
\includegraphics[width=0.45\textwidth]{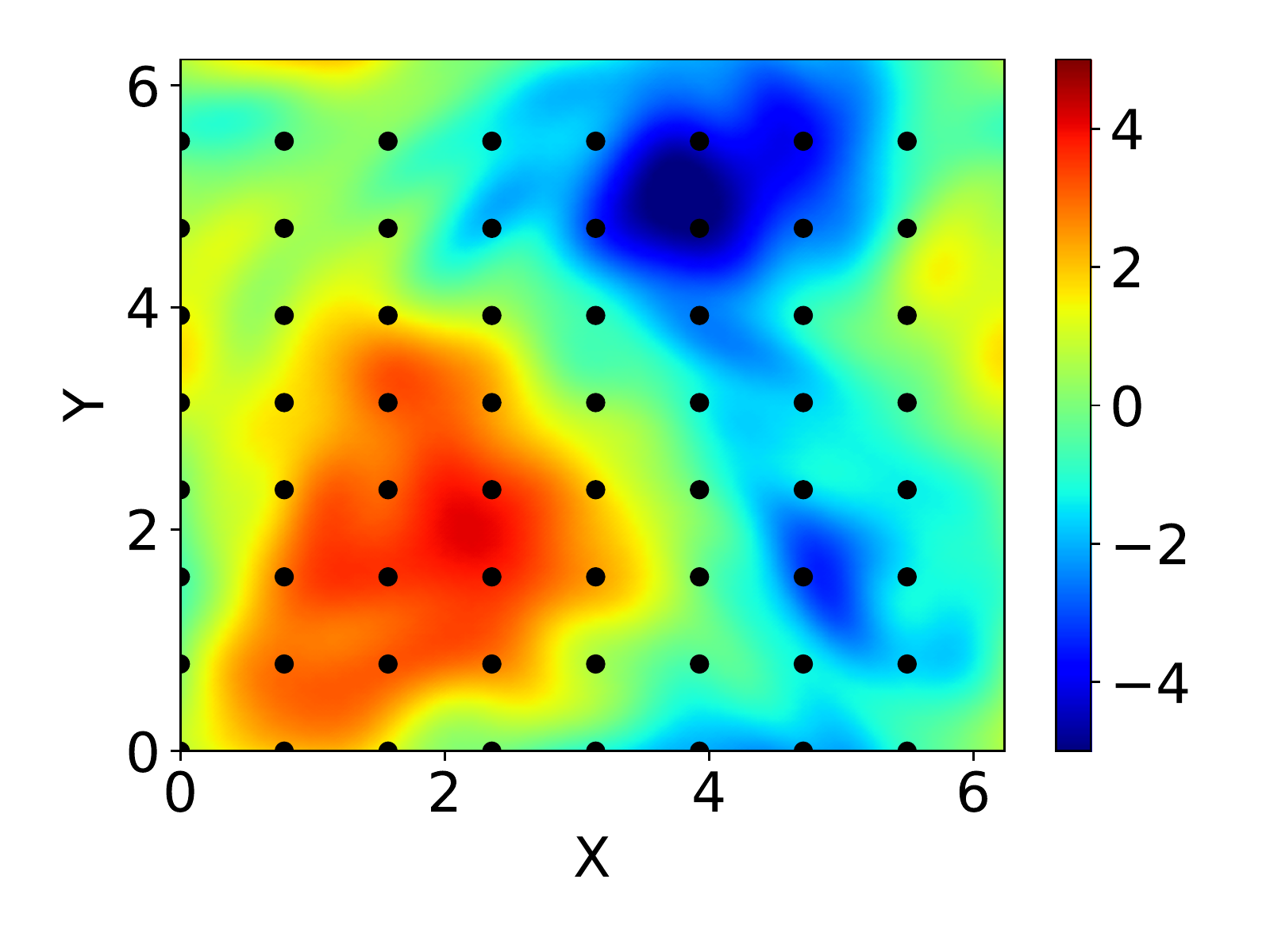}
\includegraphics[width=0.45\textwidth]{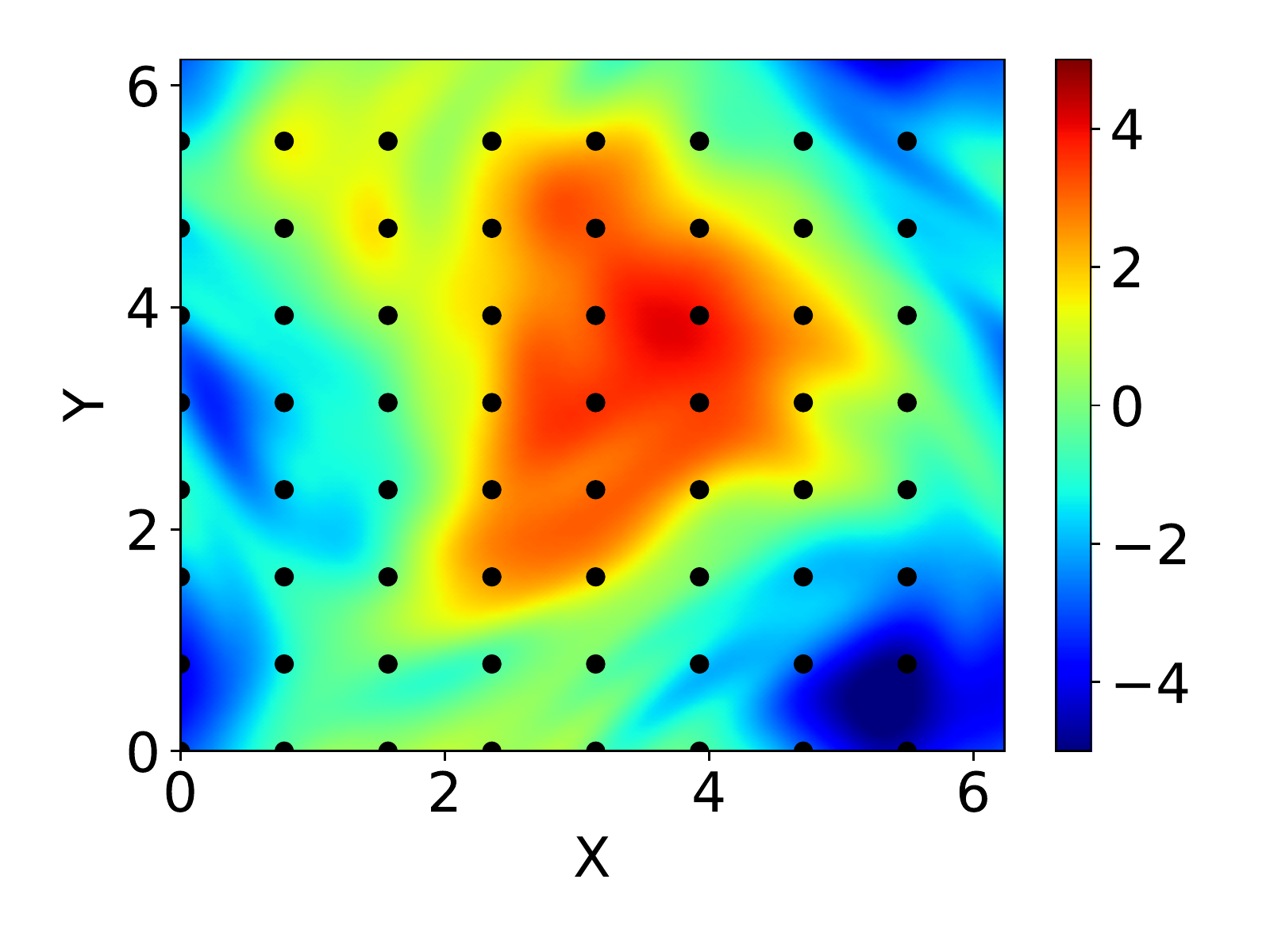}
\caption{The vorticity field of the Navier-Stokes problem and the $64$ equidistant pointwise measurements~(black dots) at two observation times~($T=0.25$ and $T=0.5$).}
\label{fig:NS-obs}
\end{figure}

The convergence of the initial vorticity field $\omega_0(x, \mean_n)$, the optimization errors, and the Frobenius  norm of the covariance are depicted in~\cref{fig:NS-UQ-Loss}. 
Thanks to the linear~(or superlinear) convergence rate of the LMA~\cite{UKI}, 
the UKI converges efficiently.

The truth random initial vorticity field $\omega_0(x,\theta_{ref})$ and the initial vorticity field recovered by UKI at the 20th iteration are depicted in~\cref{fig:NS-omega}. The UKI captures well main features of the truth random initial field and even small features, despite the irreversibility of the diffusion process~($\nu=0.01$).
The estimated parameters $\{\theta^{c}_{(l)}\}, \{\theta^{s}_{(l)}\}$, and the associated 3-$\sigma$ confidence intervals obtained by the UKI are depicted in~\cref{fig:NS-theta}. Most of the reference values are in the confidence interval.

\begin{figure}[ht]
\centering
    \includegraphics[width=0.98\textwidth]{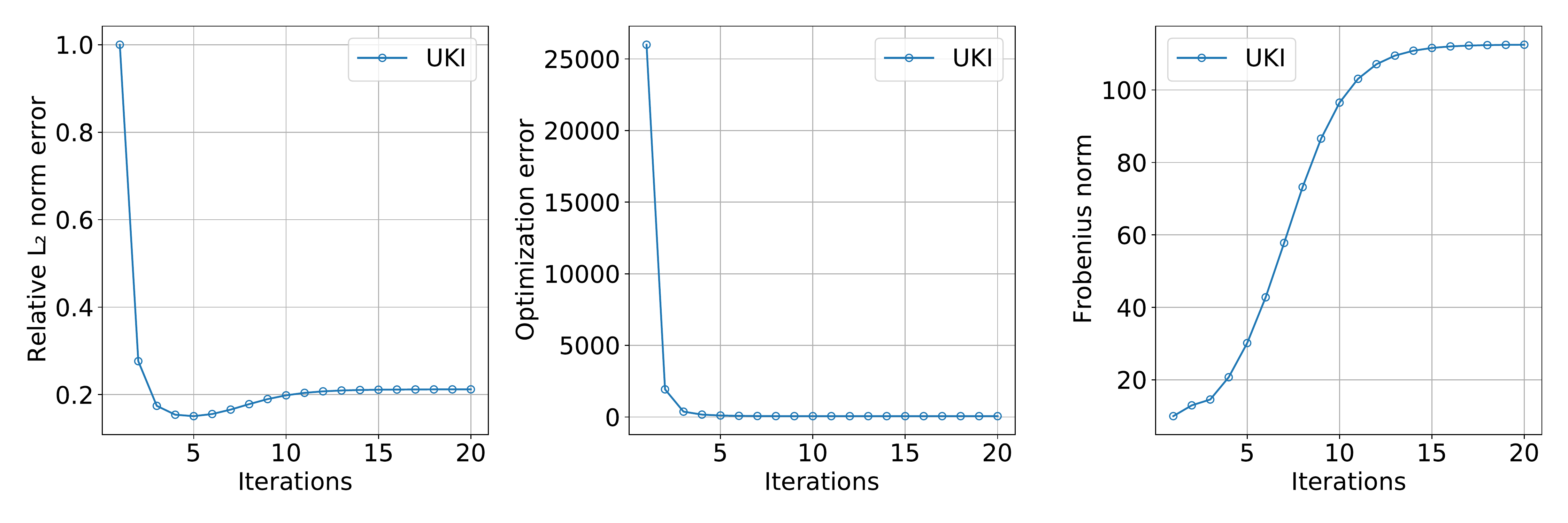}
    \caption{
    Relative error $\frac{\lVert\omega_0(x,\mean_n) - \omega_{0,ref}\rVert_2}{\lVert \omega_{0,ref}\rVert_2}$ (left), the optimization error $\displaystyle \frac{1}{2}\lVert\Sigma_{\nu}^{-\frac{1}{2}} (y_{obs} - \py_n)\rVert^2$~(middle), and the Frobenius norm of covariance $\lVert \Cov_n\rVert_F$ for the Naviar-Stokes initial inverse problem.}
    \label{fig:NS-UQ-Loss}
\end{figure}

\begin{figure}[ht]
\centering
    \includegraphics[width=0.45\textwidth]{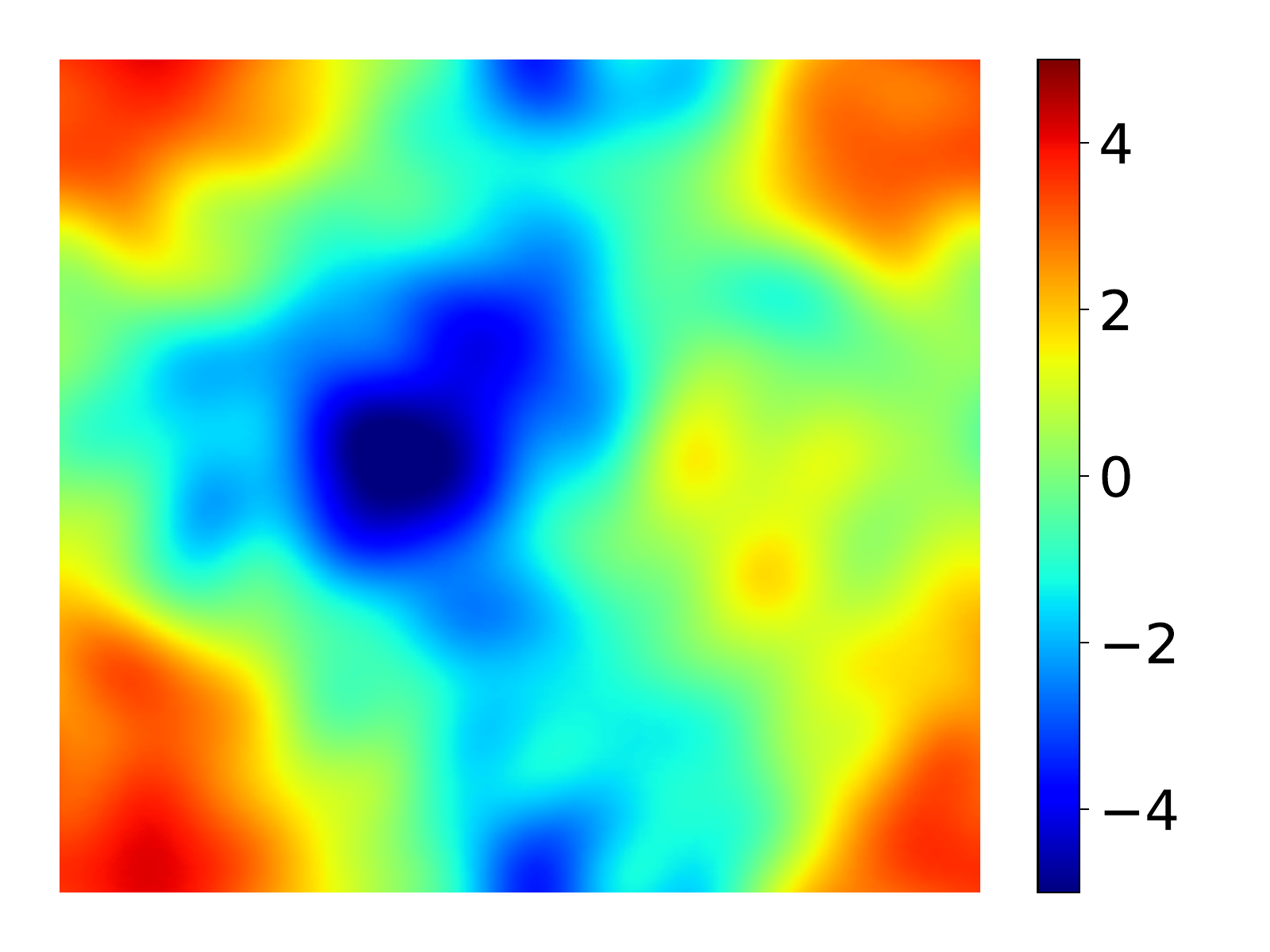}
    \includegraphics[width=0.45\textwidth]{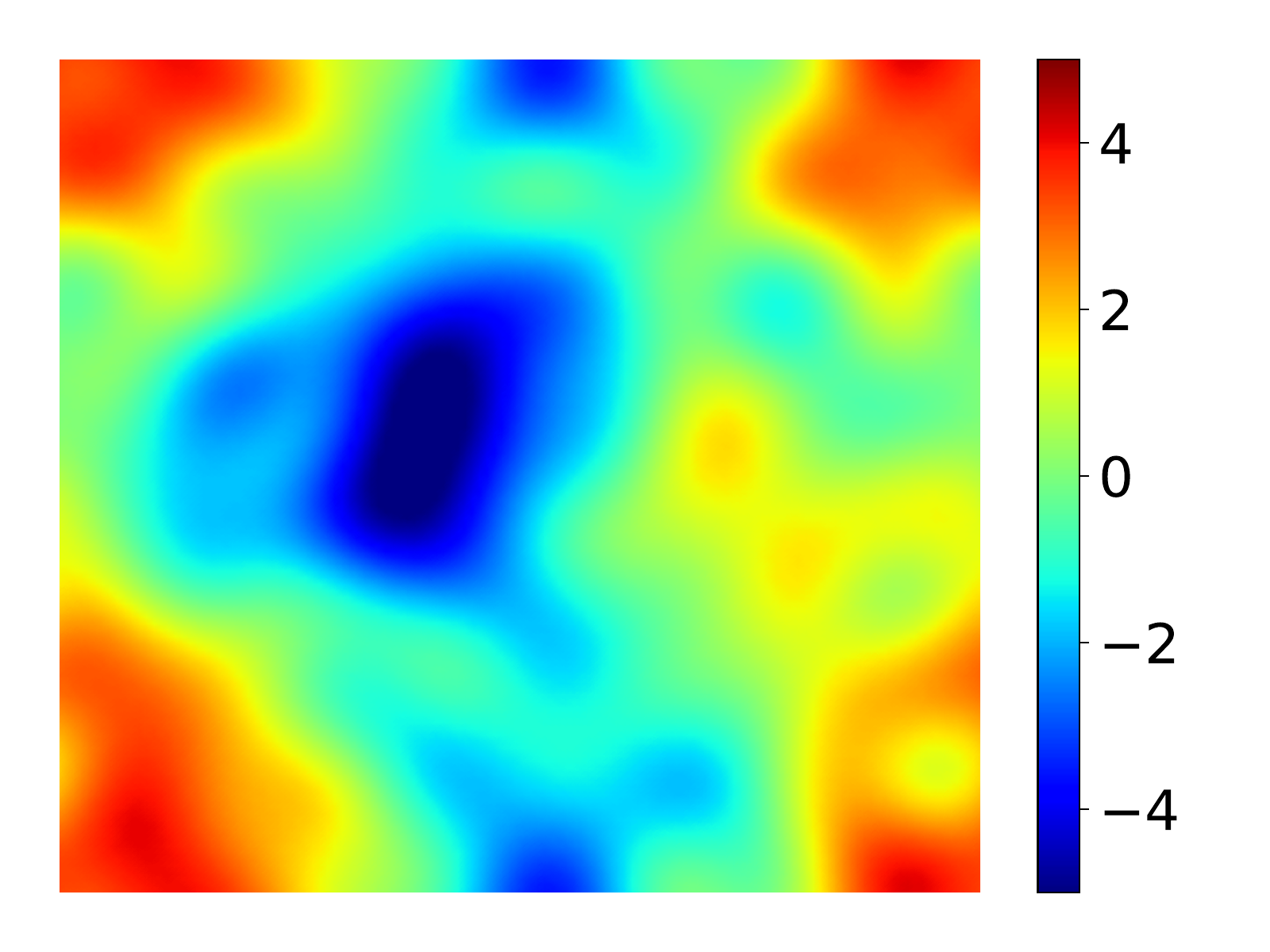}
    \caption{
    Truth initial vorticity field~$\omega_0(x, \theta_{ref})$~(left) and the Initial vorticity fields $\omega_0(x, \mean_n)$ recovered by the UKI~(right) for the Naviar-Stokes problem.}
    \label{fig:NS-omega}
\end{figure}

\begin{figure}[ht]
\centering
    \includegraphics[width=0.98\textwidth]{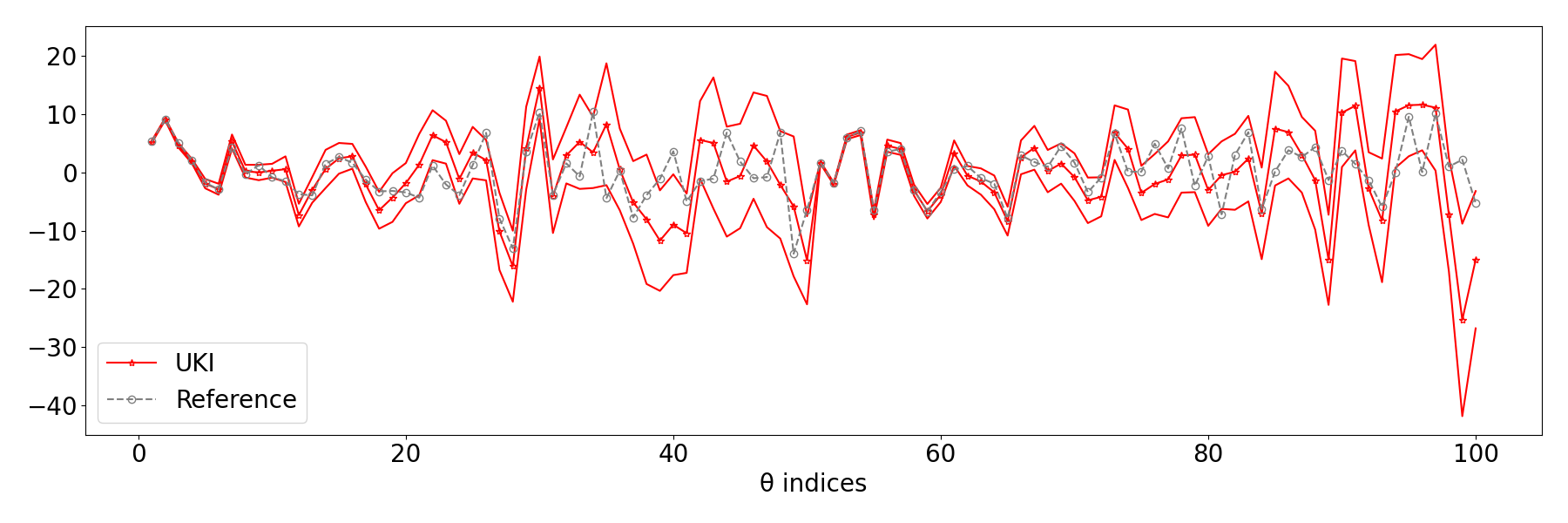}
    \caption{
    The estimated Fourier expansion parameters $\{\theta^{c}_{(l)}\}, \{\theta^{s}_{(l)}\}$ and the associated 3-$\sigma$ confidence intervals obtained by UKI for the Naviar-Stokes problem.}
    \label{fig:NS-theta}
\end{figure}

\section{Conclusion}
Unscented Kalman inversion is proposed as a derivative-free optimization method for inverse problems.
In this paper, we further study the capability of UKI  for Bayesian inference and uncertainty quantification.
We focus on well-posed inverse problems, since a wide range of inverse problems belong to this category, especially with the availability of large and diverse data sets from experiments and direct simulations.
Theoretical guarantees are presented, for linear inverse problems, the mean and covariance obtained by the UKI converge exponentially fast to the posterior mean and covariance;
for nonlinear invertible inverse problems, the error bounds of the mean and covariance are derived, in terms of the observation error.
Theoretical study of the UKI for nonlinear over-determined inverse problems is worth further investigation.
Numerical studies empirically confirm the theoretical results and demonstrate the effectiveness of UKI as an efficient Bayesian inference tool.
For the Navier-Stokes problem, the model error exists. It is interesting to systematically quantify model-form error by using UKI in the future.

\paragraph{Acknowledgments} D.Z.H. is supported by the generosity of Eric and Wendy Schmidt by recommendation of the Schmidt Futures program. J.H. is supported by the Simons Foundation as a Junior Fellow at New York University. The authors thank Prof. Sebastian Reich for helpful comments on the draft.

\appendix 
\section{Gaussian Approximation}
\label{sec:app:GA}
For the Gaussian approximation in~\cref{sec:KF_Intro}, assume the distribution of $\theta_n|Y_n$ is $\mu_{n} = \N(\mean_n, \Cov_n)$, then the distribution of $\theta_{n+1}|Y_n$ is $\hat{\mu}_{n+1} = \N(\pmean_{n+1}, \pCov_{n+1})$.
The joint density function of $\{\theta_{n+1}, y_{n+1}\}|Y_n$ is $p(\theta, y)$:
\begin{equation}
\label{eq:true-joint}
  \pf(\theta_{n+1}, y_{n+1}|Y_n) = \pf(y_{n+1}|\theta_{n+1}, Y_n) \pf(\theta_{n+1}|Y_n) \propto  e^{- \Phi (\theta_{n+1}, y_{n+1})} \pf_0(\theta_{n+1}),
\end{equation}
where $p_0$ is the density function of $\theta_{n+1}|Y_n$.
The Gaussian approximated joint density function is $p^G(\theta, y)$
\begin{equation}
    \pf^G(\theta_{n+1}, y_{n+1}| Y_n) =
    \N\Big(
    \begin{bmatrix}
    \pmean_{n+1}\\
    \py_{n+1}
    \end{bmatrix}, 
    \begin{bmatrix}
  \pCov_{n+1} & \pCov_{n+1}^{\theta p}\\
    {{\pCov_{n+1}}^{\theta p}}{}^{T} & \pCov_{n+1}^{pp}
    \end{bmatrix}
    \Big),
\end{equation}
It is worth mentioning $p(\theta, y)$ and $p^G(\theta, y)$ have the same mean and covariance by the definition of $p^G$.
To obtain an upper bound for the Kullback–Leibler divergence between the true distribution $\theta_{n+1}|Y_{n+1}$ and $\N(\mean_{n+1}, \Cov_{n+1})$, we need the following theorem:

\begin{theorem}
\label{thereom:KL_bound}
Let denote the joint density function of $x = (\theta\,;\,y)$
\begin{equation}
\label{eq:the-p}
    \pf(x) = \frac{1}{\hat{Z}} e^{-Q(x) - \epsilon H(x)} \quad \textrm { with } \quad 
    Q(x) = \frac{1}{2} (
    x - \pmean)^T \pCov^{-1} (x - \pmean),
\end{equation}
here $Q(x)$ represents the quadrature term, $\epsilon H(x)$ represents other terms and $\hat{Z}$ is the normalization constant.
Let $\pmean^G$ and $\pCov^G$ denote the mean and covariance of $\pf(x)$, 
the approximated joint Gaussian density function is 
\begin{equation*}
    \pf^G(x) = \frac{1}{\hat{Z}^G} e^{-Q^G(x)}  \quad \textrm { with } \quad 
    Q^G (x) = \frac{1}{2} (
    x - \pmean^G)^T {\pCov}^{G^{-1}} (x - \pmean^G).
\end{equation*}
We assume  
\begin{enumerate}
    \item $\exists~c_0 > 0 \textrm{ and } c_1 > 0$, such that  $| H(x) |\leq c_0 x^T x + c_1$, and $\lambda_{min}(\pCov^{-1}) > 2 \epsilon c_0$,\footnote{$\lambda_{min}(A)$ denotes the smallest eigenvalue of matrix $A$.}
    \item$\exists~c_2 > 0$, such that $\lambda_{min} (\pCov) > c_2$, $\lambda_{min} (\pCov^{G^{-1}}) > c_2$, and $\lambda_{min} (\pCov^{G}) > c_2$,
    \item dim$(x)$, $c_i$, and each entry of $y$, $\pmean$, $\pCov$, $\pCov^{-1}$, $\pmean^G$, $\pCov^G$, $\pCov^{G^{-1}}$ are $\bigO(1)$ constants, and $\epsilon$ is small enough.
\end{enumerate}
We have an upper bound for the Kullback–Leibler divergence between the conditional distributions:
\begin{equation}
\label{eq:KL-bound}
    \begin{split}
        \mathcal{D}_{KL}\Big( \pf^{G}(\theta | y) \, \Big|\Big| \, \pf(\theta | y) \Big) = \bigO(\epsilon).
    \end{split}
\end{equation}
\end{theorem}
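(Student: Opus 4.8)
The plan is to view $\pf$ as an $\bigO(\epsilon)$ perturbation of the pure Gaussian $\N(\pmean,\pCov)$ and to carry this estimate through to the conditional laws. First I would Taylor expand $e^{-\epsilon H(x)} = 1 - \epsilon H(x) + r_\epsilon(x)$ with the remainder controlled by $|r_\epsilon(x)| \le \tfrac12\epsilon^2 H(x)^2 e^{\epsilon |H(x)|}$. The at-most-quadratic growth $|H(x)| \le c_0 x^T x + c_1$ together with $\lambda_{min}(\pCov^{-1}) > 2\epsilon c_0$ makes each of $H(x)$, $x H(x)$, $xx^T H(x)$, and $r_\epsilon(x)$ (multiplied by at most quadratic factors) dominated by a genuine Gaussian whose quadratic form $Q(x) - \epsilon c_0 x^T x$ stays positive definite, so every integral that appears against $e^{-Q}$ is finite and bounded in terms of the $\bigO(1)$ data. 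This gives $\hat{Z} = Z_0\bigl(1 + \bigO(\epsilon)\bigr)$, where $Z_0$ normalizes $e^{-Q}$, and in the same way $\pmean^G = \pmean + \bigO(\epsilon)$, $\pCov^G = \pCov + \bigO(\epsilon)$, and $\texttt{det}\,\pCov^G = \texttt{det}\,\pCov\,\bigl(1 + \bigO(\epsilon)\bigr)$. Together with the non-degeneracy in hypothesis~(2), namely $\lambda_{min}(\pCov^G), \lambda_{min}(\pCov^{G^{-1}}) > c_2$, this upgrades to $\pCov^{G^{-1}} = \pCov^{-1} + \bigO(\epsilon)$.

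Writing $x = (\theta\,;\,y)$ with $y$ fixed, I would then factor $\pf^G(\theta|y)/\pf(\theta|y) = \bigl(\pf^G(\theta,y)/\pf(\theta,y)\bigr)\cdot\bigl(\pf(y)/\pf^G(y)\bigr)$, so that
\[
\mathcal{D}_{KL}\bigl(\pf^G(\theta|y)\,\big\|\,\pf(\theta|y)\bigr) = \E_{\pf^G(\theta|y)}\Bigl[\log\frac{\pf^G(\theta,y)}{\pf(\theta,y)}\Bigr] + \log\frac{\pf(y)}{\pf^G(y)}.
\]
The log of the joint ratio is $Q(\theta,y) - Q^G(\theta,y) + \epsilon H(\theta,y) + \log\hat{Z} - \log\hat{Z}^G$. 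Since $Q$ and $Q^G$ are quadratic and differ only through the $\bigO(\epsilon)$ perturbations of mean and precision, $|Q(\theta,y) - Q^G(\theta,y)| \le \bigO(\epsilon)\bigl(1 + \lVert\theta\rVert^2\bigr)$; the growth bound gives the same envelope for $\epsilon H(\theta,y)$; and $\log\hat{Z} - \log\hat{Z}^G = \bigO(\epsilon)$ since $\hat{Z}/Z_0 = 1 + \bigO(\epsilon)$ and $\texttt{det}\,\pCov^G/\texttt{det}\,\pCov = 1 + \bigO(\epsilon)$. Hence the integrand above is pointwise $\bigO(\epsilon)\bigl(1 + \lVert\theta\rVert^2\bigr)$.

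It then remains to integrate and to handle the marginal term. The conditional $\pf^G(\theta|y)$ is Gaussian, and by the Schur-complement formulas its mean and covariance are assembled from the blocks of $\pmean^G$, $\pCov^G$, $\pCov^{G^{-1}}$, which by the previous paragraph and hypothesis~(2) are $\bigO(1)$ and non-degenerate; hence $\E_{\pf^G(\theta|y)}[\lVert\theta\rVert^2] = \bigO(1)$ and $\E_{\pf^G(\theta|y)}\bigl[\log(\pf^G(\theta,y)/\pf(\theta,y))\bigr] = \bigO(\epsilon)$. For the marginal term, both $\pf(y)$ and $\pf^G(y)$ lie within $\bigO(\epsilon)$ of the unperturbed marginal $\N(\pmean_y,\pCov_{yy})$ evaluated at $y$ --- for $\pf(y)$ one integrates $e^{-\epsilon H} = 1 + \bigO(\epsilon)$ out in $\theta$ and uses $\hat{Z} = Z_0(1+\bigO(\epsilon))$, for $\pf^G(y)$ one uses $\pmean^G = \pmean + \bigO(\epsilon)$ and $\pCov^G = \pCov + \bigO(\epsilon)$ --- and this unperturbed marginal density is a positive $\bigO(1)$ number because $y - \pmean_y$ and the eigenvalues of $\pCov_{yy}$ are $\bigO(1)$ and bounded below. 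Therefore $\log(\pf(y)/\pf^G(y)) = \bigO(\epsilon)$, and adding the two contributions yields $\mathcal{D}_{KL}\bigl(\pf^G(\theta|y)\,\big\|\,\pf(\theta|y)\bigr) = \bigO(\epsilon)$.

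The step I expect to be the main obstacle is making the ``Gaussian plus $\bigO(\epsilon)$'' claim quantitatively uniform: the Taylor-in-$\epsilon$ control of the normalizing constant and of the first two moments, with constants depending only on the $\bigO(1)$ data. This is exactly where the structural hypotheses enter --- the quadratic growth of $H$ with $\lambda_{min}(\pCov^{-1}) > 2\epsilon c_0$ keeps every integrand dominated by a genuine Gaussian, while the lower bounds on $\lambda_{min}(\pCov^G)$ and $\lambda_{min}(\pCov^{G^{-1}})$ prevent the conditional moments and the determinants from degenerating. Once these joint estimates are secured, the passage to the conditional distribution is routine Gaussian algebra.
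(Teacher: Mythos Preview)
Your proposal is correct and follows essentially the same strategy as the paper: show $\pmean^G=\pmean+\bigO(\epsilon)$ and $\pCov^G=\pCov+\bigO(\epsilon)$ using that $|H|\le c_0 x^Tx+c_1$ with $\lambda_{\min}(\pCov^{-1})>2\epsilon c_0$ keeps all integrals Gaussian-dominated, then pass to the conditionals via Schur complements and bound the KL integrand term by term. The only cosmetic differences are that the paper obtains the moment perturbation by writing the difference as $\int_0^\epsilon \partial_t\bigl(\hat{\E}[fe^{-tH}]/\hat{\E}[e^{-tH}]\bigr)\,dt$ rather than a Taylor remainder, and it works directly with the conditional normalizing constants $Z(y),Z^G(y)$ (computing $\int \pf^G(\theta|y)(Q-Q^G)$ exactly via the trace formula) instead of your joint/marginal factorization; both routes yield the same $\bigO(\epsilon)$ bound.
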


\begin{proof}
We first prove
\begin{equation}
\label{eq:lamma1}
    \lVert \pmean^G - \pmean \rVert_{\infty} = \bigO(\epsilon) \quad \textrm{ and } \quad \lVert \pCov^G - \pCov \rVert_{\infty} = \bigO(\epsilon).
\end{equation}
We denote a Gaussian distribution $\pf_0(x)$ with mean $\pmean$ and covariance $\pCov$, and its associated expectation $\hat{\E}[\cdot]$ as following,
\begin{equation*} 
\pf_0(x) = \frac{1}{Z_0} e^{-Q(x)} \qquad  \hat{\E}[f] = \int f(x) \pf_0(x)   dx.
\end{equation*}
Since we consider only mean and covariance, we restrict $f$ to be $x$ and $xx^T$ in the following discussion.
The expectation with respect to the density function $\pf(x)$ in \cref{eq:the-p} can be written as
\begin{equation}
    \int  f(x) \pf(x) dx = \frac{\hat{\E} f e^{-\epsilon H(x)}}{\hat{\E} e^{-\epsilon H(x)}},
\end{equation}
which leads to  
\begin{equation}
\label{eq:mean_cov_diff}
\begin{split}
      \int f(x) \pf(x) - f(x) \pf_0(x) d\theta 
    = &\frac{\hat{\E}[ f(x) e^{-\epsilon H(x)} ]}{\hat{\E} [ e^{-\epsilon H(x)} ]} - \hat{\E} [ f(x) ] \\
    = &\int_{0}^{\epsilon} \partial_t \frac{\hat{\E}[ f(x) e^{-t H(x)} ]}{\hat{\E} [ e^{-t H(x)} ]} dt \\
     = &\int_{0}^{\epsilon} \frac{\hat{\E}[ - f(x) H(x) e^{-t H(x)} ]}{\hat{\E} [ e^{-t H(x)} ]} +  
     \frac{\hat{\E}[ f(x) e^{-t H(x)} ] \hat{\E} [ H(x) e^{-t H(x)} ] }{\hat{\E} [ e^{-t H(x)} ]^2} dt.
\end{split}
\end{equation}
Since $\lambda_{min}(\pCov^{-1}) > 2\epsilon c_0$, $\pf_0(x) e^{\epsilon c_0x^Tx}$ decays exponentially. For $t\in[0,\,\epsilon]$, we have
\begin{equation}
\label{eq:exp_bound}
\begin{split}
    &\Big|\frac{1}{ \hat{\E}[e^{-t H(x)} ] }\Big|  \leq \frac{1}{\hat{\E}[ e^{-\epsilon c_0x^Tx - \epsilon c_1} ]} = \bigO(1),  \\
    &\Big\lVert  \hat{\E}[f(x)e^{-t H(x)} ] \Big\rVert_{\infty}  \leq  \hat{\E}[ (\lVert x\rVert_{\infty} + N_{\theta}x^Tx)e^{\epsilon c_0x^Tx  + \epsilon c_1}  ] = \bigO(1), \\
    &\Big\lVert   \hat{\E}[H(x)e^{-t H(x)} ] \Big\rVert_{\infty}  \leq  \hat{\E}[ ( c_0 x^Tx + c_1 )e^{\epsilon c_0x^Tx  + \epsilon c_1}  ] = \bigO(1), \\
    &\Big\lVert  \hat{\E}[f(x)H(x)e^{t H(x)} ] \Big\rVert_{\infty}  \leq  \hat{\E}[  (\lVert x\rVert_{\infty} + N_{\theta}x^Tx)( c_0 x^Tx + c_1 ) e^{\epsilon c_0x^Tx + \epsilon c_1}  ] = \bigO(1).
\end{split}
\end{equation}
Plugging \cref{eq:exp_bound} into \cref{eq:mean_cov_diff} leads to 
\begin{equation}
\begin{split}
      \Big\lVert \int f(x) \pf(x) - f(x) \pf_0(x) dx \Big\rVert_{\infty}
    = \bigO(\epsilon),
\end{split}
\end{equation}
which finishes the proof of \cref{eq:lamma1}.

Next, we define conditional distributions
\begin{equation*}
    \pf(\theta|y) = \frac{\pf(x)}{\int \pf(x) dy} := \frac{1}{Z(y)}  e^{-Q(\theta|y) - \epsilon H(\theta, y)} 
    \quad \textrm{and} \quad
    \pf^G(\theta|y) = \frac{\pf^G(x)}{\int \pf^G(x) dy} := \frac{1}{Z^G(y)} e^{-Q^G(\theta| y)}, 
\end{equation*}
where 
\begin{equation*}
 Q(\theta | y) = \frac{1}{2} (
    \theta - \mean)^T \Cov^{-1} (\theta - \mean)
    \quad \textrm{and}\quad
    Q^G (\theta | y) = \frac{1}{2} (
    \theta - \mean^G)^T {\Cov}^{G^{-1}} (\theta - \mean^G).
\end{equation*}
The conditional normal distribution theorem leads to 
\begin{equation}
  \begin{split}
  \label{eq:mean_cov}
&\mean = \pmean_1 + \pCov_{12}\pCov_{22}^{-1}(y - \pmean_2)   \qquad 
\textrm{and} \qquad \Cov = \pCov_{11} - \pCov_{12}\pCov_{22}^{-1}\pCov_{21}, \\
&\mean^G = \pmean^G_1 + \pCov^G_{12}{\pCov^{G^{-1}}_{22}}(y - \pmean^G_2)    
\qquad \textrm{and} \qquad \Cov^G = \pCov^G_{11} - \pCov^G_{12}{\pCov^{G^{-1}}_{22}}\pCov^G_{21},
\end{split}  
\end{equation}
here the subscripts $1$ and $2$ correspond to $\theta$ and $y$ components, respectively.

We then prove
\begin{equation}
\label{eq:lamma2}
    \lVert \mean^G - \mean \rVert_{\infty} = \bigO(\epsilon) \quad \textrm{ and } \quad \lVert \Cov^G - \Cov \rVert_{\infty} = \bigO(\epsilon).
\end{equation}
Since $\lambda_{min}(\pCov ) > c_2$ and $\lambda_{min}(\pCov^{G} ) > c_2$, we have
\begin{equation}
\label{eq:lambda_min1}
    \lambda_{min}(\pCov_{22} ) = \min_{y^Ty=1} y^T \pCov_{22}y = 
    \min_{y^Ty=1, \theta = 0} (\theta\,;\,y)^T \pCov (\theta\,;\,y) 
    \geq \min_{x^Tx=1} x^T \pCov x  =  \lambda_{min}(\pCov ) > c_2,
\end{equation}
and 
\begin{equation}
\label{eq:lambda_min2}
\begin{aligned}
    \lambda_{min}(\pCov^G_{22} ) = \min_{y^Ty=1} y^T \pCov_{22}^{G}y = 
    \min_{y^Ty=1, \theta = 0} (\theta\,;\,y)^T \pCov^{G} (\theta\,;\,y) 
    \geq \min_{x^Tx=1} x^T \pCov^{G} x  =  \lambda_{min}(\pCov^{G} ) > c_2.
\end{aligned}
\end{equation}
Bringing \cref{eq:lambda_min1,eq:lambda_min2,eq:lamma1} into \cref{eq:mean_cov} leads to \cref{eq:lamma2}.

Moreover, since the Schur complement $\Cov$ in~\cref{eq:mean_cov} satisfies  
\begin{equation}
\label{eq:shur-0}
\begin{aligned}
  \lambda_{max}(\pCov) =   \lambda_{max}\Big(
  \begin{bmatrix}
     C & \\
      & 0
  \end{bmatrix} +  
  \begin{bmatrix}
     \pCov_{12}\pCov_{22}^{-{1}/{2}}\\
     \pCov_{22}^{{1}/{2}}
  \end{bmatrix} 
  \begin{bmatrix}
     \pCov_{22}^{-{1}/{2}} \pCov_{21} &
     \pCov_{22}^{{1}/{2}}
  \end{bmatrix} 
  \Big) \geq \lambda_{max}(\Cov)
\end{aligned},
\end{equation}
we have 
\begin{equation}
\label{eq:shur}
\begin{aligned}
      \lambda_{min}(\Cov^{-1}) \geq  \lambda_{min}(\pCov^{-1}) > 2\epsilon c_0 \qquad 
      \lambda_{min}(\Cov^{G^{-1}}) \geq  \lambda_{min}(\pCov^{G^{-1}}) > c_2.
\end{aligned}
\end{equation}

Finally, the Kullback–Leibler divergence can be written as 
\begin{equation}
\label{eq:KL}
  \int \pf^G(\theta | y) \log\Big(\frac{\pf^G(\theta|y)}{\pf(\theta|y)}\Big) d\theta = \int \pf^G(\theta | y) \Big( Q(\theta|y) + \epsilon H(\theta, y) - Q^G(\theta| y) + \log{Z} - \log{Z^G} \Big) d\theta. 
\end{equation}
The first part of \cref{eq:KL} can be written as 
\begin{equation}
\label{eq:KL_1}
\begin{split}
    & \int \pf^G(\theta | y) \Big( Q(\theta| y) + \epsilon H(\theta, y) - Q^G(\theta|y) \Big) d\theta\\
    =&  \int \pf^G(\theta | y)  \epsilon H(\theta, y) d\theta + \int \pf^G(\theta | y)\Big( Q(\theta|y) - Q^G(\theta|y) \Big) d\theta\\
    =&  \int \pf^G(\theta | y)  \epsilon H(\theta, y) d\theta + \frac{1}{2} tr(C^{-1}C^G) + \frac{1}{2}(\mean^G - \mean)^T C^{-1} (\mean^G - \mean) - \frac{1}{2} tr( C^{G^{-1}}C^G)\\
    =&  \int \pf^G(\theta | y)  \epsilon H(\theta, y) d\theta + \frac{1}{2} tr\Big( C^{-1} (C^G-C) \Big) + \frac{1}{2}(\mean^G - \mean)^T C^{-1} (\mean^G - \mean)\\
    =& \bigO(\epsilon).
\end{split}
\end{equation}
For the second part of \cref{eq:KL}, 
let denote $Z_0 = \int e^{-Q(\theta| y)} d\theta$ and the expectation $\E[\cdot]$ with respect to the Gaussian distribution 
$$ \pf_0(\theta|y) = \frac{1}{Z_0(y)} e^{-Q(\theta| y)}.$$
Combining \cref{eq:shur}  and \cref{eq:exp_bound} leads to 
\begin{equation}
\label{eq:Z_bound}
\begin{split}
    &\log{Z} - \log{{Z_0}} = \log \E[e^{-\epsilon H(\theta, y)}] 
    = \int_0^{\epsilon}  \partial_t \log \E[e^{-t H(\theta, y)}]  dt
    = \int_0^{\epsilon} \frac{\E[-H(\theta, y) e^{-tH(\theta, y)}]}{\E [e^{-tH(\theta, y)}] } dt = \bigO(\epsilon), \\
  &\log{Z_0} - \log{Z^G} = \frac{1}{2}\Big( \log \det{\Cov^G} - \log \det{\Cov} \Big) = \bigO(\epsilon).
\end{split}
\end{equation}
Bringing \cref{eq:Z_bound} into  the second part of \cref{eq:KL} leads to 
\begin{equation}
\label{eq:KL_2}
\begin{split}
    \int \pf^G(\theta | y) \Big( \log{Z} - \log{Z^G} \Big) d\theta = (\log{Z} - \log{{Z_0}}) + (\log{{Z_0}} - \log{Z^G}) = \bigO(\epsilon).\\
\end{split}
\end{equation}
Combining \cref{eq:KL_1} and \cref{eq:KL_2} leads to the upper bound of the Kullback–Leibler divergence.

\end{proof}

Following~\cref{thereom:KL_bound}, we assume $\Phi$ can be decomposed into the quadratic part~$Q^{\Phi}(\theta, y)$ and the high order part $\epsilon H(\theta, y)$, as following
\begin{equation*}
    \Phi(\theta, y) = Q^{\Phi}(\theta, y) + \epsilon H(\theta, y),
\end{equation*}
which is true when $\G(\theta)$ is close to a linear function. 
We further define 
$$\displaystyle Q(\theta, y) = Q^{\Phi}(\theta, y) + \frac{1}{2}(\theta - \pmean_{n+1})^T\pCov_{n+1}^{-1}(\theta - \pmean_{n+1}).$$
When $Q$ and $H$ satisfy the conditions in \cref{thereom:KL_bound}, the conditional distribution $\N(\mean_{n+1}, \Cov_{n+1})$ 
well approximates the true distribution $\theta_{n+1}|Y_{n+1}$.

\begin{remark}
When $\G$ is linear, namely $\Phi(\theta, y)$ is quadratic and $\epsilon=0$, $\N(\mean_{n+1}, \Cov_{n+1})$  is the exact distribution of $\theta_{n+1}|Y_{n+1}$. 
\end{remark}

\section{Proof of Theorem~\ref{th:lin_converge}}
\label{sec:app:Linear-UKI}
\begin{proof}

With the hyperparameters defined in~\cref{eq:hyperparameters}, the update equation of $\{\Cov_n\}$ in \cref{eq:Lin_KF_analysis} can be rewritten as 
\begin{equation}
\label{eq:Lin_KF_Cinv}
\begin{split}
    &\Cov_{n+1}^{-1} = G^T\Sigma_{\nu}^{-1}G + (\Cov_n + \Sigma_{\omega})^{-1} = \frac{1}{2}G^T\Sigma_{\eta}^{-1}G + (2\Cov_n)^{-1}.\\
\end{split}
\end{equation}
We have a close formula for $\Cov^{-1}_{n}$: 
\begin{equation}
\label{eq:beta_1}
    \Cov_n^{-1} = 
    \Big[1 - \frac{1}{2^n}\Big] G^T\Sigma_{\eta}^{-1}G + \frac{1}{2^n} \Cov_0^{-1}.
\end{equation}
This leads to the exponential convergence $\displaystyle \lim_{n\to \infty} \Cov_n^{-1} = G^T\Sigma_{\eta}^{-1}G$.

The convergence proof of $\mean_n$ basically follows~\cite{UKI}. \Cref{eq:Lin_KF_Cinv,eq:beta_1} lead to 
\begin{equation}
\label{eq:C_bound}
\begin{split}
\frac{1}{2}G^T\Sigma_{\eta}^{-1}G = G^T\Sigma_{\nu}^{-1}G \preceq\Cov_{n+1}^{-1} \preceq G^T\Sigma_{\nu}^{-1}G + \Sigma_{+}
    \quad \textrm{where} \quad 
    \Sigma_{+} = 
     G^T\Sigma_{\nu}^{-1}G + \Cov_0^{-1}. 
\end{split}
\end{equation}
The update equation of $\mean_n$ in \cref{eq:Lin_KF_analysis} can be rewritten as 
\begin{equation}
\mean_{n+1} = \mean_{n} + \Cov_{n+1}G^T\Sigma_{\nu}^{-1}(y - G\mean_{n}).
\end{equation}
Consider the Range($G^T$) $\otimes$ Ker($G$) decomposition of $\mean_n = \mean_n^{\parallel} + \mean_n^{\bot}$ with projections $P_{G^{\parallel}}$ and $P_{G^{\bot}}$, we have 
\begin{subequations}
\begin{align}
    &\mean_{n+1}^{\parallel} = \mean_{n}^{\parallel} + P_{G^{\parallel}}\Cov_{n+1}G^T\Sigma_{\nu}^{-1}(y - G\mean_{n}^{\parallel}), \label{eq:contracting-1} \\
    &\mean_{n+1}^{\bot} = \mean_n^{\bot} +  P_{G^{\bot}} \Cov_{n+1}G^T\Sigma_{\nu}^{-1}(y - G\mean_{n}^{\parallel}). \label{eq:contracting-2}
\end{align}
\end{subequations}
Constraining on Range($G^T$), we have the fact that $B := G^T\Sigma_{\nu}^{-1}G$ is symmetric and $B \succ 0$. From this, it follows that $\I -\pCov_{n+1} B$ has the same spectrum as $\I - B^{\frac{1}{2}}\Cov_{n+1}B^{\frac{1}{2}}$. Using the bounds on $\Cov_{n+1}$ appearing in~\cref{eq:C_bound}, the spectral radius of the update matrix in \cref{eq:contracting-1} satisfies
\begin{equation}
\label{eq:cov-mean-par}
\begin{split}
\rho(P_{G^{\parallel}} - P_{G^{\parallel}}\Cov_{n+1}G^T\Sigma_{\nu}^{-1}G) &\leq \rho(P_{G^{\parallel}}) \rho\Big(\I - \sqrt{G^T\Sigma_{\nu}^{-1}G}\Cov_{n+1}\sqrt{G^T\Sigma_{\nu}^{-1}G}\Big) \\
&\leq
1 - \rho\Big(\sqrt{G^T\Sigma_{\nu}^{-1}G}\big(G^T\Sigma_{\nu}^{-1}G + \Sigma_{+}\big)^{-1}\sqrt{G^T\Sigma_{\nu}^{-1}G}\Big) \\
&= 1 -\epsilon_0,
\end{split}
\end{equation}
where $\epsilon_0 \in (0,1)$. Hence, we have $\{\mean_{n}^{\parallel}\}$ converges exponentially to $\mean_{\infty}^{\parallel}$, which satisfies $\displaystyle G^T\Sigma_{\nu}^{-1}(y - G\mean_{\infty}^{\parallel}) = 0$. The update equation~\cref{eq:contracting-2} of $m_n^{\bot}$ can be written as
\begin{equation}
\begin{split}
    &\mean_{n+1}^{\bot} = \mean_n^{\bot} +  P_{G^{\bot}} \Cov_{n+1}G^T\Sigma_{\nu}^{-1}G(\mean_{\infty}^{\parallel} - \mean_{n}^{\parallel}).
\end{split}
\end{equation}
Since $\displaystyle \rho(P_{G^{\bot}} \Cov_{n+1}G^T\Sigma_{\nu}^{-1}G) \leq  \rho(P_{G^{\bot}}) \leq 1$ and $\displaystyle \lim_{n\rightarrow \infty} m_n^{\parallel} - \mean_{\infty}^{\parallel} = 0$ converges exponentially fast, we have the exponential convergence of  $\{\mean_n^{\bot}\}$ to $\mean_{\infty}^{\bot}$. Therefore, the converged vector $\mean_{\infty} = \mean_{\infty}^{\parallel} + \mean_{\infty}^{\bot}$ satisfies  $\displaystyle G^T\Sigma_{\nu}^{-1}(y - G\mean_{\infty}) = 0$, which is a minimizer of $\Phi$.
\end{proof}

\section{Proof of Theorem \ref{theorm:nonlinear}}
\label{sec:app:theorm:nonlinear:proof}
\begin{proof}
Since we consider only mean and covariance, we consider function $f(\theta) = \theta \textrm{ or } \theta \theta^T$. The expectation with respect to the posterior distribution $p(\theta|y)$ is  
\begin{equation}
\label{eq:exp_posterior}
\begin{split}
  \E[f(\theta)|y]  &= \int \frac{1}{Z} e^{- \Phi (\theta, y)} f(\theta) d\theta \\
                    &= \frac{1}{Z} \int  e^{- \frac{1}{2}(y - \G(\theta))^T \Sigma^{-1}_{\eta} (y - \G(\theta))} f(\theta) d\theta \\ 
                    &=  \frac{1}{Z} \int  e^{- \frac{1}{2}(y - \theta^{'})^T \Sigma^{-1}_{\eta} (y - \theta^{'})} f(\G^{-1}(\theta^{'})) \Big| \texttt{det}\frac{d\G^{-1}(\theta^{'})}{d\theta}\Big| d\theta' \textrm{ where } \theta' = \G(\theta).
\end{split}
\end{equation}
The expectation with respect to the pull-back distribution is 
\begin{equation}
\label{eq:exp_inverse_transform}
\begin{split}
    \E[f(\theta^{\eta})]  &= \frac{1}{Z_{\eta}}\int f(\G^{-1}(y - \eta)) e^{-\frac{1}{2}\eta^T \Sigma_{\eta}^{-1} \eta } d\eta \\
                     &= \frac{1}{Z_{\eta}}\int f(\G^{-1}(\theta^{'})) e^{-\frac{1}{2}(y - \theta^{'})^T \Sigma_{\eta}^{-1} (y - \theta^{'}) } d\theta^{'} \textrm{ where } \theta' = y - \eta.\\
\end{split}
\end{equation}
here $Z_{\eta} = \sqrt{(2\pi)^{N_y} \texttt{det}\Sigma_{\eta}}$.

Let denote $\E_{\theta}[\cdot]$ the expectation with respect to the Gaussian density function $\N(y, \Sigma_{\eta})$: 
\begin{equation}
\begin{split}
  &\E_{\theta}[f] = \frac{1}{Z_{\eta}} \int e^{- \frac{1}{2}(y - \theta)^T \Sigma^{-1}_{\eta} (y - \theta)}  f(\theta)   d\theta, \\
\end{split}
\end{equation}
The difference between these two expectations~\cref{eq:exp_posterior} and~\cref{eq:exp_inverse_transform} becomes

\begin{equation}
\label{eq:exp_diff}
\begin{split}
\E[f(\theta)|y] -   \E[f(\theta^{\eta})] =& \frac{1}{Z_{\eta}}\int  e^{- \frac{1}{2}(y - \theta^{'})^T \Sigma^{-1}_{\eta} (y - \theta^{'})} f(\G^{-1}(\theta^{'})) \Big(\frac{\Big| \texttt{det}\frac{d\G^{-1}(\theta^{'})}{d\theta}\Big|}{Z/Z_{\eta}} - 1 \Big) d\theta^{'}\\
=& \E_{\theta'} \Big[ f(\G^{-1}(\theta^{'})) \Big(\frac{\Big| \texttt{det}\frac{d\G^{-1}(\theta^{'})}{d\theta}\Big|}{\E_{\theta} \Big| \texttt{det}\frac{d\G^{-1}(\theta)}{d\theta}\Big|} - 1 \Big) \Big]. \\
\end{split}
\end{equation}
Here we use $$\frac{Z}{Z_{\eta}} = \E_{\theta} \Big| \texttt{det}\frac{d\G^{-1}(\theta)}{d\theta}\Big|.$$
Since we have the Lipschitz property, this leads to 
\begin{equation}
\label{eq:det-1}
\begin{split}
 \Big| \frac{\Big| \texttt{det}\frac{d\G^{-1}(\theta^{'})}{d\theta}\Big|}{\E_{\theta} \Big[ \Big| \texttt{det}\frac{d\G^{-1}(\theta)}{d\theta}\Big|\Big]} - 1  \Big| 
= &\Big|  \frac{\E_{\theta} \Big[ | \texttt{det}\frac{d\G^{-1}(\theta^{'})}{d\theta} | - | \texttt{det}\frac{d\G^{-1}(\theta)}{d\theta}|\Big] }{\E_{\theta}\Big[ | \texttt{det}\frac{d\G^{-1}(\theta)}{d\theta}|\Big]} \Big|  \\
\leq &  \frac{\E_{\theta}  \Big[\Big| | \texttt{det}\frac{d\G^{-1}(\theta^{'})}{d\theta} | - | \texttt{det}\frac{d\G^{-1}(y)}{d\theta}|\Big| \Big] + \E_{\theta} \Big[ \Big| | \texttt{det}\frac{d\G^{-1}(y)}{d\theta} | - | \texttt{det}\frac{d\G^{-1}(\theta)}{d\theta}|\Big|\Big] }{\E_{\theta} \Big[| \texttt{det}\frac{d\G^{-1}(\theta)}{d\theta}|\Big]} \\
\leq & \frac{\E_{\theta} \Big[c_0\lVert \theta' - y \rVert^{c_1} + c_0\lVert \theta - y \rVert^{c_1} \Big] }{\E_{\theta} \Big[ | \texttt{det}\frac{d\G^{-1}(\theta)}{d\theta}| \Big]} \\  
= & \frac{ c_0\lVert \theta' - y \rVert^{c_1} + c_0 \E_{\theta} \Big[ \lVert \theta - y \rVert^{c_1} \Big]}{\E_{\theta} \Big[ | \texttt{det}\frac{d\G^{-1}(\theta)}{d\theta}| \Big]}. \\
\end{split}
\end{equation}
Bringing~\cref{eq:det-1} into \cref{eq:exp_diff} leads to 
\begin{equation*}
\begin{split}
 &\Big\lVert \E_{\theta'} \Big[ f(\G^{-1}(\theta^{'})) \Big(\frac{\Big| \texttt{det}\frac{d\G^{-1}(\theta^{'})}{d\theta}\Big|}{\E_{\theta} \Big| \texttt{det}\frac{d\G^{-1}(\theta)}{d\theta}\Big|} - 1 \Big) \Big] \Big\rVert_{\infty} \\
\leq &\frac{c_0 \E_{\theta'} \Big[ \Big\lVert f(\G^{-1}(\theta^{'})) \Big\rVert_{\infty} \lVert \theta' - y \rVert^{c_1} \Big] }{\E_{\theta} \Big[ \Big| \texttt{det}\frac{d\G^{-1}(\theta)}{d\theta}\Big| \Big] }  + c_0 \E_{\theta'} \Big[ \Big\lVert f(\G^{-1}(\theta^{'})) \Big\rVert_{\infty} \Big]  \frac{ \E_{\theta} \Big[ \lVert \theta - y \rVert^{c_1} \Big] }{\E_{\theta} \Big[ \Big| \texttt{det}\frac{d\G^{-1}(\theta)}{d\theta}\Big| \Big]}  \\
\leq &\frac{c_0 \Big( \E_{\theta'} \Big[ \Big\lVert f(\G^{-1}(\theta^{'})) \Big\rVert_{\infty}^2 \Big] \E_{\theta'} \Big[ \lVert \theta' - y \rVert^{2c_1}\Big] \Big)^{1/2}}{\E_{\theta} \Big[ \Big| \texttt{det}\frac{d\G^{-1}(\theta)}{d\theta}\Big|\Big]}
+ c_0 \E_{\theta'} \Big[ \Big\lVert f(\G^{-1}(\theta^{'})) \Big\rVert_{\infty} \Big]  \frac{ \E_{\theta} \Big[ \lVert \theta - y \rVert^{c_1} \Big] }{\E_{\theta} \Big[ \Big| \texttt{det}\frac{d\G^{-1}(\theta)}{d\theta}\Big| \Big]}  \\
\leq &\frac{c_0 \Big( \E_{\theta'} \Big[ \Big\lVert f(\G^{-1}(\theta^{'})) \Big\rVert_{\infty}^2 \Big]  \Big)^{1/2}}{c_3/Z_{\eta}} 
\Big(\E_{\theta'} \Big[ \lVert \theta' - y \rVert^{2c_1}\Big]\Big)^{1/2}
+ c_0 \E_{\theta'} \Big[ \Big\lVert f(\G^{-1}(\theta^{'})) \Big\rVert_{\infty} \Big]  \frac{ \E_{\theta} \Big[ \lVert \theta - y \rVert^{c_1} \Big] }{c_3/Z_{\eta}} \\
\leq &\frac{2 c_0 (c_2 + N_y)}{c_3} \bigO \Big(\rho(\Sigma_{\eta})^{c_1} \sqrt{\texttt{det}\Sigma_{\eta}}\Big).
\end{split}
\end{equation*}
This leads to the bounds~\eqref{eq:post-pull-bound}.
\end{proof}

\section{Proof of Theorem~\ref{th:ExKI}}
\label{sec:app:ExKI:proof}
\begin{proof}
The update equations of the ExKI in~\cref{eq:ExKI-1.1,eq:ExKI-1.2} can be written as 
\begin{equation}
\label{eq:app:ExKI-1}
    \begin{split}
    &\mean_{n+1} = \mean_{n} + \pCov_{n+1} d\G (\mean_{n})^T\Big(d\G (\mean_{n})\pCov_{n+1}d\G (\mean_{n})^T + \Sigma_{\nu}\Big)^{-1}\big(y - \G(\mean_{n})\big),\\
    &\Cov_{n+1} = \pCov_{n+1} - \pCov_{n+1} d\G (\mean_{n})^T\Big(d\G (\mean_{n})\pCov_{n+1}d\G (\mean_{n})^T + \Sigma_{\nu}\Big)^{-1}
    d\G (\mean_{n})\pCov_{n+1},
    \end{split}
\end{equation}
where $\pCov_{n+1} = \Cov_{n} + \Sigma_{\omega} = 2\Cov_n$.
By applying Sherman–Morrison–Woodbury formula, the ExKI update equations~\eqref{eq:app:ExKI-1} can be rewritten as 
\begin{equation}
\label{eq:app:ExKI-2}
    \begin{split}
    &\mean_{n+1} = \mean_{n} + \Cov_{n+1} d\G (\mean_{n})^T\Sigma_{\nu}^{-1}\big(y - \G(\mean_{n})\big),\\
    &\Cov_{n+1}^{-1} = \frac{1}{2}\Cov_{n}^{-1} +  
    d\G(\mean_{n})^T \Sigma_{\nu}^{-1} d\G(\mean_{n}).
    \end{split}
\end{equation}
The stationery mean and covariance $\mean_{*}, \Cov_{*}$ of~\cref{eq:app:ExKI-2} satisfy
\begin{equation}
\label{eq:app:ExKI-3}
    \begin{split}
    &0 = \Cov_{*} d\G (\mean_{*})^T\Sigma_{\nu}^{-1}\big(y - \G(\mean_{*})\big),\\
    &\Cov_{*}^{-1} = \frac{1}{2}\Cov_{*}^{-1} +  
    d\G(\mean_{*})^T \Sigma_{\nu}^{-1} d\G(\mean_{*}).
    \end{split}
\end{equation}
Since $\Cov_{*}$ and $d\G(\mean_{*})$ are non-singular, they are uniquely determined as following, 
\begin{equation*}
    \mean_{*} = \G^{-1}(y)  \quad \textrm{and} \quad 
    \Cov_*^{-1} = d\G(\mean_{*})^T\Sigma_{\eta}^{-1}d\G(\mean_{*}).
\end{equation*}

When the extended Kalman filter is applied to estimate the mean and covariance of the pull-back random variable $\theta^{\eta} = \G^{-1}(y- \eta)$, we have
\begin{equation}
\begin{split}
&\E[\theta^{\eta}] := \G^{-1}(y- \E \eta) = \G^{-1}(y) = \mean_{*},\\
&\textrm{Cov}[\theta^{\eta}] := d\G^{-1}(y)\textrm{Cov}[\eta] d\G^{-T}(y) = d\G^{-1}(y)\Sigma_\eta d\G^{-T}(y) = \textrm{C}_{*}.
\end{split}
\end{equation}
\end{proof}

\bibliographystyle{unsrt}
\bibliography{references}

\begin{thebibliography}{10}

\bibitem{UKI}
Daniel~Z Huang, Tapio Schneider, and Andrew~M Stuart.
\newblock Unscented kalman inversion.
\newblock {\em arXiv preprint arXiv:2102.01580}, 2021.

\bibitem{UKI2}
Daniel~Z Huang and Jiaoyang Huang.
\newblock Improve unscented kalman inversion with low-rank approximation and
  reduced-order model.
\newblock {\em arXiv preprint arXiv:2102.10677}, 2020.

\bibitem{sen2013global}
Mrinal~K Sen and Paul~L Stoffa.
\newblock {\em Global optimization methods in geophysical inversion}.
\newblock Cambridge University Press, 2013.

\bibitem{schneider2017earth}
Tapio Schneider, Shiwei Lan, Andrew Stuart, and Joao Teixeira.
\newblock Earth system modeling 2.0: A blueprint for models that learn from
  observations and targeted high-resolution simulations.
\newblock {\em Geophysical Research Letters}, 44(24):12--396, 2017.

\bibitem{dunbar2020calibration}
Oliver~RA Dunbar, Alfredo Garbuno-Inigo, Tapio Schneider, and Andrew~M Stuart.
\newblock Calibration and uncertainty quantification of convective parameters
  in an idealized gcm.
\newblock {\em arXiv preprint arXiv:2012.13262}, 2020.

\bibitem{huang2020learning}
Daniel~Z Huang, Kailai Xu, Charbel Farhat, and Eric Darve.
\newblock Learning constitutive relations from indirect observations using deep
  neural networks.
\newblock {\em Journal of Computational Physics}, page 109491, 2020.

\bibitem{xu2021learning}
Kailai Xu, Daniel~Z Huang, and Eric Darve.
\newblock Learning constitutive relations using symmetric positive definite
  neural networks.
\newblock {\em Journal of Computational Physics}, 428:110072, 2021.

\bibitem{avery2020computationally}
Philip Avery, Daniel~Z Huang, Wanli He, Johanna Ehlers, Armen Derkevorkian, and
  Charbel Farhat.
\newblock A computationally tractable framework for nonlinear dynamic
  multiscale modeling of membrane fabric.
\newblock {\em arXiv preprint arXiv:2007.05877}, 2020.

\bibitem{russell1988introduction}
Brian~H Russell.
\newblock {\em Introduction to seismic inversion methods}.
\newblock SEG Books, 1988.

\bibitem{bunks1995multiscale}
Carey Bunks, Fatimetou~M Saleck, S~Zaleski, and G~Chavent.
\newblock Multiscale seismic waveform inversion.
\newblock {\em Geophysics}, 60(5):1457--1473, 1995.

\bibitem{mukherjee2020learned}
Subhadip Mukherjee, S{\"o}ren Dittmer, Zakhar Shumaylov, Sebastian Lunz, Ozan
  {\"O}ktem, and Carola-Bibiane Sch{\"o}nlieb.
\newblock Learned convex regularizers for inverse problems.
\newblock {\em arXiv preprint arXiv:2008.02839}, 2020.

\bibitem{trigo2004electrical}
Fl{\'a}vio~Celso Trigo, Raul Gonzalez-Lima, and Marcelo Britto~Passos Amato.
\newblock Electrical impedance tomography using the extended kalman filter.
\newblock {\em IEEE Transactions on Biomedical Engineering}, 51(1):72--81,
  2004.

\bibitem{engl1996regularization}
Heinz~Werner Engl, Martin Hanke, and Andreas Neubauer.
\newblock {\em Regularization of inverse problems}, volume 375.
\newblock Springer Science \& Business Media, 1996.

\bibitem{kaipio2006statistical}
Jari Kaipio and Erkki Somersalo.
\newblock {\em Statistical and computational inverse problems}, volume 160.
\newblock Springer Science \& Business Media, 2006.

\bibitem{dashti2013bayesian}
Masoumeh Dashti and Andrew~M Stuart.
\newblock The bayesian approach to inverse problems.
\newblock {\em arXiv preprint arXiv:1302.6989}, 2013.

\bibitem{kalman1960new}
Rudolph~Emil Kalman.
\newblock A new approach to linear filtering and prediction problems.
\newblock {\em J. Basic Eng. Mar}, 82(1):35--45, 1960.

\bibitem{sorenson1985kalman}
Harold~Wayne Sorenson.
\newblock {\em Kalman filtering: theory and application}.
\newblock IEEE, 1985.

\bibitem{evensen1994sequential}
Geir Evensen.
\newblock Sequential data assimilation with a nonlinear quasi-geostrophic model
  using monte carlo methods to forecast error statistics.
\newblock {\em Journal of Geophysical Research: Oceans}, 99(C5):10143--10162,
  1994.

\bibitem{julier1995new}
Simon~J Julier, Jeffrey~K Uhlmann, and Hugh~F Durrant-Whyte.
\newblock A new approach for filtering nonlinear systems.
\newblock In {\em Proceedings of 1995 American Control Conference-ACC'95},
  volume~3, pages 1628--1632. IEEE, 1995.

\bibitem{arasaratnam2009cubature}
Ienkaran Arasaratnam and Simon Haykin.
\newblock Cubature kalman filters.
\newblock {\em IEEE Transactions on automatic control}, 54(6):1254--1269, 2009.

\bibitem{singhal1989training}
Sharad Singhal and Lance Wu.
\newblock Training multilayer perceptrons with the extended kalman algorithm.
\newblock In {\em Advances in neural information processing systems}, pages
  133--140, 1989.

\bibitem{puskorius1991decoupled}
Gintaras~V Puskorius and Lee~A Feldkamp.
\newblock Decoupled extended kalman filter training of feedforward layered
  networks.
\newblock In {\em IJCNN-91-Seattle International Joint Conference on Neural
  Networks}, volume~1, pages 771--777. IEEE, 1991.

\bibitem{wan2000unscented}
Eric~A Wan and Rudolph Van Der~Merwe.
\newblock The unscented kalman filter for nonlinear estimation.
\newblock In {\em Proceedings of the IEEE 2000 Adaptive Systems for Signal
  Processing, Communications, and Control Symposium (Cat. No. 00EX373)}, pages
  153--158. Ieee, 2000.

\bibitem{chen2012ensemble}
Yan Chen and Dean~S Oliver.
\newblock Ensemble randomized maximum likelihood method as an iterative
  ensemble smoother.
\newblock {\em Mathematical Geosciences}, 44(1):1--26, 2012.

\bibitem{iglesias2013ensemble}
Marco~A Iglesias, Kody~JH Law, and Andrew~M Stuart.
\newblock Ensemble kalman methods for inverse problems.
\newblock {\em Inverse Problems}, 29(4):045001, 2013.

\bibitem{emerick2013investigation}
Alexandre~A Emerick and Albert~C Reynolds.
\newblock Investigation of the sampling performance of ensemble-based methods
  with a simple reservoir model.
\newblock {\em Computational Geosciences}, 17(2):325--350, 2013.

\bibitem{zafari2005assessing}
Mohammad Zafari, Albert~Coburn Reynolds, et~al.
\newblock Assessing the uncertainty in reservoir description and performance
  predictions with the ensemble kalman filter.
\newblock In {\em SPE Annual Technical Conference and Exhibition}. Society of
  Petroleum Engineers, 2005.

\bibitem{anderson2001ensemble}
Jeffrey~L Anderson.
\newblock An ensemble adjustment kalman filter for data assimilation.
\newblock {\em Monthly weather review}, 129(12):2884--2903, 2001.

\bibitem{bishop2001adaptive}
Craig~H Bishop, Brian~J Etherton, and Sharanya~J Majumdar.
\newblock Adaptive sampling with the ensemble transform kalman filter. part i:
  Theoretical aspects.
\newblock {\em Monthly weather review}, 129(3):420--436, 2001.

\bibitem{smith2013sequential}
Adrian Smith.
\newblock {\em Sequential Monte Carlo methods in practice}.
\newblock Springer Science \& Business Media, 2013.

\bibitem{kitanidis1995quasi}
Peter~K Kitanidis.
\newblock Quasi-linear geostatistical theory for inversing.
\newblock {\em Water resources research}, 31(10):2411--2419, 1995.

\bibitem{oliver1996conditioning}
Dean~S Oliver, Nanqun He, and Albert~C Reynolds.
\newblock Conditioning permeability fields to pressure data.
\newblock In {\em ECMOR V-5th European conference on the mathematics of oil
  recovery}, pages cp--101. European Association of Geoscientists \& Engineers,
  1996.

\bibitem{reich2013nonparametric}
Sebastian Reich.
\newblock A nonparametric ensemble transform method for bayesian inference.
\newblock {\em SIAM Journal on Scientific Computing}, 35(4):A2013--A2024, 2013.

\bibitem{del2006sequential}
Pierre Del~Moral, Arnaud Doucet, and Ajay Jasra.
\newblock Sequential monte carlo samplers.
\newblock {\em Journal of the Royal Statistical Society: Series B (Statistical
  Methodology)}, 68(3):411--436, 2006.

\bibitem{kantas2014sequential}
Nikolas Kantas, Alexandros Beskos, and Ajay Jasra.
\newblock Sequential monte carlo methods for high-dimensional inverse problems:
  A case study for the navier--stokes equations.
\newblock {\em SIAM/ASA Journal on Uncertainty Quantification}, 2(1):464--489,
  2014.

\bibitem{beskos2015sequential}
Alexandros Beskos, Ajay Jasra, Ege~A Muzaffer, and Andrew~M Stuart.
\newblock Sequential monte carlo methods for bayesian elliptic inverse
  problems.
\newblock {\em Statistics and Computing}, 25(4):727--737, 2015.

\bibitem{evensen2009ensemble}
Geir Evensen.
\newblock The ensemble kalman filter for combined state and parameter
  estimation.
\newblock {\em IEEE Control Systems Magazine}, 29(3):83--104, 2009.

\bibitem{gu2006ensemble}
Yaqing Gu and Dean~S Oliver.
\newblock The ensemble kalman filter for continuous updating of reservoir
  simulation models.
\newblock 2006.

\bibitem{huang2018simulation}
Zhengyu Huang, Philip Avery, Charbel Farhat, Jason Rabinovitch, Armen
  Derkevorkian, and Lee~D Peterson.
\newblock Simulation of parachute inflation dynamics using an eulerian
  computational framework for fluid-structure interfaces evolving in high-speed
  turbulent flows.
\newblock In {\em 2018 AIAA Aerospace Sciences Meeting}, page 1540, 2018.

\bibitem{huang2019high}
Daniel~Z Huang, P-O Persson, and Matthew~J Zahr.
\newblock High-order, linearly stable, partitioned solvers for general
  multiphysics problems based on implicit--explicit runge--kutta schemes.
\newblock {\em Computer Methods in Applied Mechanics and Engineering},
  346:674--706, 2019.

\bibitem{huang2020high}
Daniel~Z Huang, Will Pazner, Per-Olof Persson, and Matthew~J Zahr.
\newblock High-order partitioned spectral deferred correction solvers for
  multiphysics problems.
\newblock {\em Journal of Computational Physics}, page 109441, 2020.

\bibitem{huang2020modeling}
Daniel~Z Huang, Philip Avery, Charbel Farhat, Jason Rabinovitch, Armen
  Derkevorkian, and Lee~D Peterson.
\newblock Modeling, simulation and validation of supersonic parachute inflation
  dynamics during mars landing.
\newblock In {\em AIAA Scitech 2020 Forum}, page 0313, 2020.

\bibitem{adcroft2019gfdl}
Alistair Adcroft, Whit Anderson, V~Balaji, Chris Blanton, Mitchell Bushuk,
  Carolina~O Dufour, John~P Dunne, Stephen~M Griffies, Robert Hallberg,
  Matthew~J Harrison, et~al.
\newblock The gfdl global ocean and sea ice model om4. 0: Model description and
  simulation features.
\newblock {\em Journal of Advances in Modeling Earth Systems},
  11(10):3167--3211, 2019.

\bibitem{peskin1977numerical}
Charles~S Peskin.
\newblock Numerical analysis of blood flow in the heart.
\newblock {\em Journal of computational physics}, 25(3):220--252, 1977.

\bibitem{berger2012progress}
Marsha Berger and Michael Aftosmis.
\newblock Progress towards a cartesian cut-cell method for viscous compressible
  flow.
\newblock In {\em 50th AIAA Aerospace Sciences Meeting Including the New
  Horizons Forum and Aerospace Exposition}, page 1301, 2012.

\bibitem{huang2018family}
Daniel~Z Huang, Dante De~Santis, and Charbel Farhat.
\newblock A family of position-and orientation-independent embedded boundary
  methods for viscous flow and fluid--structure interaction problems.
\newblock {\em Journal of Computational Physics}, 365:74--104, 2018.

\bibitem{huang2020embedded}
Daniel~Z Huang, Philip Avery, and Charbel Farhat.
\newblock An embedded boundary approach for resolving the contribution of cable
  subsystems to fully coupled fluid-structure interaction.
\newblock {\em International Journal for Numerical Methods in Engineering},
  2020.

\bibitem{berger1989local}
Marsha~J Berger, Phillip Colella, et~al.
\newblock Local adaptive mesh refinement for shock hydrodynamics.
\newblock {\em Journal of computational Physics}, 82(1):64--84, 1989.

\bibitem{borker2019mesh}
Raunak Borker, Daniel Huang, Sebastian Grimberg, Charbel Farhat, Philip Avery,
  and Jason Rabinovitch.
\newblock Mesh adaptation framework for embedded boundary methods for
  computational fluid dynamics and fluid-structure interaction.
\newblock {\em International Journal for Numerical Methods in Fluids},
  90(8):389--424, 2019.

\bibitem{iglesias2016regularizing}
Marco~A Iglesias.
\newblock A regularizing iterative ensemble kalman method for pde-constrained
  inverse problems.
\newblock {\em Inverse Problems}, 32(2):025002, 2016.

\bibitem{schillings2017analysis}
Claudia Schillings and Andrew~M Stuart.
\newblock Analysis of the ensemble kalman filter for inverse problems.
\newblock {\em SIAM Journal on Numerical Analysis}, 55(3):1264--1290, 2017.

\bibitem{schillings2018convergence}
Claudia Schillings and Andrew~M Stuart.
\newblock Convergence analysis of ensemble kalman inversion: the linear, noisy
  case.
\newblock {\em Applicable Analysis}, 97(1):107--123, 2018.

\bibitem{iglesias2020adaptive}
Marco Iglesias and Yuchen Yang.
\newblock Adaptive regularisation for ensemble kalman inversion with
  applications to non-destructive testing and imaging.
\newblock {\em arXiv preprint arXiv:2006.14980}, 2020.

\bibitem{chada2020iterative}
Neil~K Chada, Yuming Chen, and Daniel Sanz-Alonso.
\newblock Iterative ensemble kalman methods: A unified perspective with some
  new variants.
\newblock {\em arXiv preprint arXiv:2010.13299}, 2020.

\bibitem{ernst2015analysis}
Oliver~G Ernst, Björn Sprungk, and Hans-Jörg Starkloff.
\newblock Analysis of the ensemble and polynomial chaos kalman filters in
  bayesian inverse problems.
\newblock {\em SIAM/ASA Journal on Uncertainty Quantification}, 3(1):823--851,
  2015.

\bibitem{garbuno2020interacting}
Alfredo Garbuno-Inigo, Franca Hoffmann, Wuchen Li, and Andrew~M Stuart.
\newblock Interacting langevin diffusions: Gradient structure and ensemble
  kalman sampler.
\newblock {\em SIAM Journal on Applied Dynamical Systems}, 19(1):412--441,
  2020.

\bibitem{reich2015probabilistic}
Sebastian Reich and Colin Cotter.
\newblock {\em Probabilistic forecasting and Bayesian data assimilation}.
\newblock Cambridge University Press, 2015.

\bibitem{law2015data}
Kody Law, Andrew Stuart, and Kostas Zygalakis.
\newblock Data assimilation.
\newblock {\em Cham, Switzerland: Springer}, 2015.

\bibitem{le2012asymptotics}
Lucien Le~Cam and Grace~Lo Yang.
\newblock {\em Asymptotics in statistics: some basic concepts}.
\newblock Springer Science \& Business Media, 2012.

\bibitem{van2000asymptotic}
Aad~W Van~der Vaart.
\newblock {\em Asymptotic statistics}, volume~3.
\newblock Cambridge university press, 2000.

\bibitem{freedman1999wald}
David Freedman et~al.
\newblock Wald lecture: On the bernstein-von mises theorem with
  infinite-dimensional parameters.
\newblock {\em Annals of Statistics}, 27(4):1119--1141, 1999.

\bibitem{lu2017gaussian}
Yulong Lu, Andrew Stuart, and Hendrik Weber.
\newblock Gaussian approximations for probability measures on r\^{}d.
\newblock {\em SIAM/ASA Journal on Uncertainty Quantification},
  5(1):1136--1165, 2017.

\bibitem{haario1999adaptive}
Heikki Haario, Eero Saksman, and Johanna Tamminen.
\newblock Adaptive proposal distribution for random walk metropolis algorithm.
\newblock {\em Computational Statistics}, 14(3):375--395, 1999.

\bibitem{haario2001adaptive}
Heikki Haario, Eero Saksman, Johanna Tamminen, et~al.
\newblock An adaptive metropolis algorithm.
\newblock {\em Bernoulli}, 7(2):223--242, 2001.

\bibitem{gelman1997weak}
Andrew Gelman, Walter~R Gilks, and Gareth~O Roberts.
\newblock Weak convergence and optimal scaling of random walk metropolis
  algorithms.
\newblock {\em The annals of applied probability}, 7(1):110--120, 1997.

\bibitem{goodman2010ensemble}
Jonathan Goodman and Jonathan Weare.
\newblock Ensemble samplers with affine invariance.
\newblock {\em Communications in applied mathematics and computational
  science}, 5(1):65--80, 2010.

\bibitem{foreman2013emcee}
Daniel Foreman-Mackey, David~W Hogg, Dustin Lang, and Jonathan Goodman.
\newblock emcee: the mcmc hammer.
\newblock {\em Publications of the Astronomical Society of the Pacific},
  125(925):306, 2013.

\bibitem{wang2003comparison}
Xuguang Wang and Craig~H Bishop.
\newblock A comparison of breeding and ensemble transform kalman filter
  ensemble forecast schemes.
\newblock {\em Journal of the atmospheric sciences}, 60(9):1140--1158, 2003.

\bibitem{herty2018kinetic}
Michael Herty and Giuseppe Visconti.
\newblock Kinetic methods for inverse problems.
\newblock {\em arXiv preprint arXiv:1811.09387}, 2018.

\bibitem{hesthaven2007spectral}
Jan~S Hesthaven, Sigal Gottlieb, and David Gottlieb.
\newblock {\em Spectral methods for time-dependent problems}, volume~21.
\newblock Cambridge University Press, 2007.

\bibitem{orszag1972numerical}
Steven~A Orszag and GS~Patterson~Jr.
\newblock Numerical simulation of three-dimensional homogeneous isotropic
  turbulence.
\newblock {\em Physical Review Letters}, 28(2):76, 1972.

\end{thebibliography}
\end{document}